\RequirePackage[OT1]{fontenc}

\documentclass[journal,twoside,web]{ieeecolor}

\newif\ifarxiv
\arxivtrue   % arXiv版
%\arxivfalse % IEEE-TAC投稿版

%\usepackage{amsthm}
\usepackage{booktabs} 
\usepackage{generic}
\usepackage{amsmath,amssymb,amsfonts}
\usepackage{algorithmic}
\usepackage{graphicx}
\usepackage{algorithm,algorithmic}
\usepackage{hyperref}
\usepackage{textcomp}

\def\BibTeX{{\rm B\kern-.05em{\sc i\kern-.025em b}\kern-.08em
    T\kern-.1667em\lower.7ex\hbox{E}\kern-.125emX}}
\markboth{\hskip25pc IEEE }
{Ryoji Anzaki \MakeLowercase{\textit{et al.}}: 
NOMADS: Non-Markovian Optimization-based
Modeling for Approximate Dynamics with Spatially-homogeneous Memory}

%%%
\usepackage{biblatex}
\def\BibTeX{{\rm B\kern-.05em{\sc i\kern-.025em b}\kern-.08em
    T\kern-.1667em\lower.7ex\hbox{E}\kern-.125emX}}
\addbibresource{ref.bib}

%%%
\usepackage{comment}
\usepackage{physics}
\usepackage{amsmath}
\usepackage{amssymb}
\usepackage{mathtools}
\usepackage{amscd}
\usepackage{bbm}
\usepackage{bbold}
\usepackage{mathrsfs}

\usepackage{makecell}

\renewbibmacro*{name:andothers}{% Based on name:andothers from biblatex.def
  \ifboolexpr{
    test {\ifnumequal{\value{listcount}}{\value{liststop}}}
    and
    test \ifmorenames
  }
    {\ifnumgreater{\value{liststop}}{1}
       {\finalandcomma}
       {}%
     \andothersdelim\bibstring[\emph]{andothers}}
    {}}

%\addbibresource{biblatex-examples.bib}

%%%
\newtheorem{thm}{Theorem}
\newtheorem{rem}[thm]{Remark}
\newtheorem{cor}[thm]{Corollary}
\newtheorem{lem}[thm]{Lemma}
\newtheorem{defn}[thm]{Definition}
%\renewcommand{\therem}{\unskip}

%%%

\newcommand{\bx}{\boldsymbol{x}}
\newcommand{\bu}{\boldsymbol{u}}

\newcommand{\Hess}{\mathrm{Hess}}

\newcommand{\id}{\mathbb{1}}

\renewcommand{\hom}{\mathrm{hom}}

\newcommand{\A}{\mathsf{A}}

\newcommand{\rev}[1]{\textcolor{black}{#1}}

\begin{document}
\title{NOMADS: Non-Markovian Optimization-based Modeling for Approximate Dynamics with Spatially-homogeneous Memory}
\author{Ryoji Anzaki and Kazuhiro Sato
\thanks{Ryoji Anzaki was with Digital Design Center, Tokyo Electron Ltd., Tokyo, Japan (e-mail: ryo.anzaki+paper@gmail.com).}
\thanks{Kazuhiro Sato is with Graduate School of Information Science and Technology, The University of Tokyo, Tokyo, Japan (e-mail: kazuhiro@mist.i.u-tokyo.ac.jp).}}

\maketitle

\begin{abstract}
%\begingroup
%\color{black}
We propose a system identification method,
\emph{Non-Markovian Optimization-based Modeling for Approximate Dynamics with Spatially-homogeneous memory} (NOMADS),
for identifying linear dynamical systems from a set of multi-dimensional time-series data obtained through multiple partially excited experiments.
NOMADS formulates model identification as a convex optimization problem,
in which the state-space coefficient matrices and a memory kernel are estimated jointly
under physically motivated constraints using projected gradient descent.
The proposed framework models memory effects through a
spatially homogeneous kernel, enabling scalable identification of
non-Markovian dynamics while keeping the number of free parameters moderate.
This structure allows NOMADS to integrate information from multiple
multi-dimensional time-series data even when no single experiment provides full excitation.
In the Markovian setting, physical constraints can be incorporated to
enforce conservation laws.
Numerical experiments on synthetic data demonstrate that NOMADS achieves
substantially improved generalization accuracy compared to existing
DMD-based methods even for noisy train data, and reproduces energy conservation in the Markovian case.
%\endgroup
\end{abstract}

\begin{IEEEkeywords}
  Dynamical systems, linear time-invariant system, Non-Markovian dynamics, System identification, Time-series analysis
\end{IEEEkeywords}

%%%%%%%%%%%%%%%%%%%%%%%%%%%%%%%%%%%%%%%%%%%%%%%%%%%%%%%%%%%%%%%%%%%%%%%%%%%%%%%%
% Sect. I # Introduction
%%%%%%%%%%%%%%%%%%%%%%%%%%%%%%%%%%%%%%%%%%%%%%%%%%%%%%%%%%%%%%%%%%%%%%%%%%%%%%%%
\section{Introduction}
\label{sec:introduction}

%\begingroup
%\color{black}

\IEEEPARstart{S}{ystem} identification from experimental time-series data
is a fundamental problem in science and engineering and underpins prediction
and controller design \cite{goodwin2001control}.
In high-dimensional systems, however, it is often infeasible to design
experiments that sufficiently excite all degrees of freedom.
As a result, each experiment typically probes only a subset of the state--input
space, yielding \emph{partially excited} time-series data and complicating
identification and generalization to unseen conditions.

Dynamic Mode Decomposition (DMD) \cite{schmid2010dynamic,tu2014dynamic,schmid2022dynamic}
and its extension DMD with control (DMDc) \cite{proctor2016dynamic}
provide computationally efficient approaches for identifying linear dynamical
systems from data.
Their effectiveness, however, relies critically on the alignment of
excitation between training and test data; when the excited
degrees of freedom in the test data deviate from those explored by
the train data,
the test prediction error can become unbounded.

When applied to partially excited experiments, DMD-based methods suffer from
three fundamental limitations:
\begin{itemize}
  \item[(a)] \textbf{Limited generalization.}
  Models identified from insufficient excitation often exhibit rapidly growing
  prediction errors on unseen test data.
  \item[(b)] \textbf{Lack of physical consistency.}
  Prediction may violate known physical properties, such as
  conservation laws.
  \item[(c)] \textbf{Neglect of non-Markovian dynamics.}
  Standard DMD-based formulations rely on Markovian assumptions and cannot
  adequately represent systems with memory effects or hysteresis.
\end{itemize}

Limitations (a) and (b) arise from partial excitation, and these limitations may persist even when multiple experiments are available, whereas (c) reflects an intrinsic modeling restriction that becomes critical for systems interacting with an external reservoir or environment \cite{svenkeson2016spectral}, such as a thermostat coupled to a heat bath with finite heat capacity.

In this paper, we propose
\emph{Non-Markovian Optimization-based Modeling for Approximate Dynamics with Spatially-homogeneous memory}
(NOMADS), a unified identification framework that addresses these challenges.
NOMADS integrates information from multiple partially excited trajectories,
incorporates memory effects via a low-dimensional memory kernel representation, and enforces
physically motivated constraints within a convex optimization formulation.
This enables consistent identification of global dynamics even when no single
experiment provides full excitation.

The main contributions are as follows:
\begin{enumerate}
  \item We formulate identification of linear dynamical systems with spatially
  homogeneous memory as a convex optimization problem suited to a set of partially
  excited multi-trajectory data.
  \item We develop a projected gradient-based algorithm (NOMADS) and analyze
  its convexity and convergence properties.
  \item Numerical experiments demonstrate improved generalization performance and physical consistency compared with existing DMD-based methods for both Markovian and non-Markovian systems, under noiseless and noisy training conditions.
\end{enumerate}

The remainder of the paper is organized as follows.
Section~\ref{sec:preliminaries} introduces preliminaries,
Section~\ref{sec:optimization} presents the proposed framework,
Section~\ref{sec:convex} discusses the optimization algorithm,
Section~\ref{sec:numerical} reports numerical results, and
Section~\ref{sec:concluding} concludes the paper.

%\endgroup

%%%%%%%%%%%%%%%%%%%%%%%%%%%%%%%%%%%%%%%%%%%%%%%%%%%%%%%%%%%%%%%%%%%%%%%%%%%%%%%%
%%%%%%%%%%%%%%%%%%%%%%%%%%%%%%%%%%%%%%%%%%%%%%%%%%%%%%%%%%%%%%%%%%%%%%%%%%%%%%%%
% Sect. II # Preliminaries
%%%%%%%%%%%%%%%%%%%%%%%%%%%%%%%%%%%%%%%%%%%%%%%%%%%%%%%%%%%%%%%%%%%%%%%%%%%%%%%%
%%%%%%%%%%%%%%%%%%%%%%%%%%%%%%%%%%%%%%%%%%%%%%%%%%%%%%%%%%%%%%%%%%%%%%%%%%%%%%%%

\section{Preliminaries}
\label{sec:preliminaries}

%In this section, we introduce symbols related to the vectors, matrices, and tuples of matrices. We also introduce the idea of causal matrix, which we use to deal with causal dynamical systems whose initial values are given. A tuple of matrices plays a key role in our NOMADS framework.

%%%%%%%%%%%%%%%%%%%%%%%%%%%%%%%%%%%%%%%%%%%%%%%%%%%%%%%%%%%%%%%%%%%%%%%%%%%%%%%%
% Sect. II.A # Definitions and Notations
%%%%%%%%%%%%%%%%%%%%%%%%%%%%%%%%%%%%%%%%%%%%%%%%%%%%%%%%%%%%%%%%%%%%%%%%%%%%%%%%

\subsection{\rev{Metric space of tuples of matrices}}
Let us denote the set of real numbers, the set of nonnegative real numbers, set of integers and the set of nonnegative integers by $\mathbf{R}$, $\mathbf{R}_{\geq 0}$, $\mathbf{Z}$, and $\mathbf{Z}_{\geq0}$, respectively.

For $\ell, m\in \mathbf{Z}_{\geq0}$, define an ordered set of integers $[\ell:m]\coloneq \{\ell,\ell+1,\ell+2,\dots,m-1\}\subset \mathbf{Z}_{\geq0}$. We use shorthand notation $[m]\coloneq [0:m]$. Note that $[\ell:m] = \varnothing$ for $\ell \geq m$. 

For a matrix $x\in \mathbf{R}^{n\times m}$, integers $0\leq p < q < n$ and $0\leq s < t < m$, $x_{p:q,s:t} \in \mathbf{R}^{\rev{(q-p)\times (t-s)}}$
\begin{comment}
\begin{equation}
    x_{p:q,s:t} = \mqty(x_{ps} &\cdots &x_{p(t-1)}\\ 
                    \vdots & \ddots &\vdots\\
                    x_{(q-1)s} &\cdots & x_{(q-1)(t-1)}
                    ) \in \mathbf{R}^{\rev{(q-p)\times (t-s)}} 
\end{equation}
\end{comment}
is a submatrix. We define the similar notation for vectors.

\begin{comment}    
For a set $S = \left\{s_0, s_1,\dots, s_{n-1}\right\}\subset \mathbf{Z}_{\geq 0}$ equipped with a total order $<$, define an ordered tuple of quantities by
\begin{equation}
  \qty(x_i)_{i\in S} \coloneq \qty(x_{s_0}, x_{s_1},\dots,x_{s_{n-1}}) \quad s_{i} < s_{i+1} ~\mathrm{for}~ i\in[n-1].
\end{equation}
If $x_i$ is a real number, a vector, or a matrix for $i\in S$, define,
\begin{equation}\label{eq:preliminaries:null}
  \qty(x_i)_{i\in S}^q\coloneq \qty(0, 0, \cdots, 0, x_{s_{q}}, x_{s_{q+1}}\cdots, x_{s_{n-1}}),
\end{equation}
where the first $q$ elements of the tuple are replaced with $0$, the zero vector, or the zero matrix, respectively.
\end{comment}

Let us denote a vector by $\boldsymbol{v}=(v_i)_{i\in[n]}\in\mathbf{R}^n$ where $v_i$ is the $i$th component. We also denote the same vector but first $q$ entries are zero by $(v_i)_{i\in[n]}^q$. Let us denote $m\times s$ matrix filled with $0, 1\in\mathbf{R}$ by $0_{m\times s}$ and $1_{m\times s}$, and $m$ dimensional identity matrix by $\id_{m}$. For a matrix $A$, we denote the transposed matrix by $A^\top$ and the Moore-Penrose pseudoinverse by $A^+$. We also denote $A=(A_{ij})$ where $i, j$ are row and column indices, respectively. If $A$ is a square matrix, we denote the trace (sum of diagonal elements) of the matrix by $\tr(A)$. 

For matrices $A, A'$ of the same shape, let us introduce the inner product by 
\begin{equation}\label{eq:preliminaries:inner:prod:matrix}
  \left\langle A, A' \right\rangle \coloneq \tr(A^\top A').
\end{equation}
This inner product induces the Frobenius norm $\|A\| = \sqrt{\tr(A^\top A)}$. For a pair of tuple of matrices of the same shapes $\A \coloneq \qty(A_0,\cdots,A_{n-1})$ and $\A' \coloneq \qty(A'_0,\cdots,A'_{n-1})$ where $A_i, A'_i \in \mathbf{R}^{n_i\times m_i}$ for $i\in[n]$, we define
\begin{equation}\label{eq:preliminaries:metric}
  \left\langle \A, \A'\right\rangle \coloneq \sum_{i\in[n]}\tr(A_i^\top A_i').
\end{equation}
The inner product induces the norm of a tuple of matrices as follows:
\begin{equation}\label{eq:preliminaries:metric:tuple}
  \left\|\A\right\| \coloneq \sqrt{\left\langle \A, \A\right\rangle}.
\end{equation}

%%%%%%%%%%%%%%%%%%%%%%%%%%%%%%%%%%%%%%%%%%%%%%%%%%%%%%%%%%%%%%%%%%%%%%%%%%%%%%%%
% Sect. II.B # Toeplitz Matrices
%%%%%%%%%%%%%%%%%%%%%%%%%%%%%%%%%%%%%%%%%%%%%%%%%%%%%%%%%%%%%%%%%%%%%%%%%%%%%%%%

\subsection{Causal Matrices}

In this subsection, we prepare the basic concepts to define linear dynamical systems with homogeneous memory in the next section. The definitions below are useful when dealing with dynamical systems whose state vectors at the first $q+1$ $(q\in\mathbf{Z}_{\geq0})$ time points are given.

\rev{For a function $h: [m] \to \mathbf{R}$ of time $t$, define \textit{time-ordered vector} as $\mathsf{T}_h([m]) = (h(0),h(1), \dots, h({m-1}))$.}

\begin{defn}[Causal matrix]\label{defn:causal}
  A matrix $C \in \mathbf{R}^{m\times m}$ is called causal matrix if $C$ satisfies the following properties for an integer $q\geq0$:
  \begin{itemize}
    \item $C_{:,:q} = 0_{m\times q}$
    \item $C_{q:,q:}$ is an invertible upper-triangular matrix
  \end{itemize}
\end{defn}
%\begingroup
%\color{black}
A causal matrix is a discrete representation of a causal linear operator defined in \cite{anzaki2023dynamic}. It is called \textit{causal}, because a vector-matrix multiplication with a causal matrix does not violate the causality for a time-ordered vector, i.e., the multiplication results in the weighted sum of \textit{past} elements of the original vector \rev{$\mathsf{T}_h([m])$} for each element: for $t>q$,
\begin{eqnarray}
    (\rev{\mathsf{T}_h([m])}^\top C)_t = \sum_{t'\in [q:t]}C_{tt'}h(t').
\end{eqnarray}
%\endgroup

\begin{defn}[Time-invariant causal matrix]
  A matrix $C \in \mathbf{R}^{m\times m}$ is called \textit{time-invariant} causal matrix if $C$ satisfies the following property: in addition to the properties shown in Definition \ref{defn:causal}, there is a $m$-dimensional Toeplitz matrix $T\in\mathbf{R}^{m\times m}$ such that $C_{:,q:} = T_{:,q:}$.
\end{defn}
A time-invariant causal matrix is called time-invariant because $C_{t,s} = C_{t+\delta t, s+\delta t}$.

\begin{defn}
  Assume that $C \in \mathbf{R}^{m\times m}$ is a time-invariant causal matrix with nullity $q$ \rev{(i.e., in this case, the first $q$ columns are zero)}. We say that $C$ is normalized if the diagonal elements of $C_{q:,q:}$ are unity.
\end{defn}

Let us denote the set of $m$-dimensional normalized time-invariant causal matrices with nullity $q$ by $\mathbb{T}_m^q$. Also, we use a shorthand notation 
\begin{equation}\label{eq:preliminaries:unit:of:causal:matrix}
  \id_{m}^q \coloneq \mqty(0_{q\times q} & 0_{q\times (m-q)}\\ 0_{(m-q)\times q} & \id_{m-q}).
\end{equation}

\begin{lem}\label{lemma:preliminaries:pseudoinverse:causal:matrix}
  The left pseudoinverse \footnote{For a matrix $A\in \mathbf{R}^{n\times n}$, a left (right) pseudoinverse $L\in \mathbf{R}^{n\times n}$ ($R\in \mathbf{R}^{n\times n}$) is a matrix that satisfies $LA = \id_n^{q}$ ($AR = \id_n^{q}$) where $q$ is the nullity of $A$.} of $C \in \mathbb{T}_m^q$ is given as follows:
  \begin{equation}
    C^+ = \mqty(0_{q\times q} & 0_{q\times (m-q)}\\ 0_{(m-q)\times q} & (C_{q:,q:})^{-1}).
  \end{equation}
\end{lem}

Note that, for a matrix $C\in\mathbb{T}_m^q$, the left pseudoinverse $C^+$ defined above is not the right pseudoinverse of $C$ in general:
\begin{equation}\label{eq:preliminaries:left:pseudoinverse:from:right}
  CC^+ = \mqty(0_{q\times q} & C_{:q,q:}(C_{q:,q:})^{-1}\\ 0_{(m-q)\times q} & \id_{m-q}).
\end{equation}

We denote the subset of $\mathbb{T}_m^q$ whose elements has bandwidth $Q$ by $\mathbb{T}_m^q(Q)$. Namely, for a matrix $(c_{i,j}) \in \mathbb{T}_m^q(Q)$, $c_{i-k,i}$ is nonzero only if $k \in[\max(Q,i)]$ for $i\in[m]$.

The following theorem is useful to show the convergence of optimization problem for $C\in\mathbb{T}_m^q(Q)$. The proof is straightforward and thus omitted.
\begin{thm} \label{Thm5}
  Assume $Q\geq 0$, $Q \geq q\geq 0$, and $m\geq Q$ are integers. $\mathbb{T}_m^q(Q)$ is a convex and closed subset of $\mathbf{R}^{m\times m}$.
\end{thm}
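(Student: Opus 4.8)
The plan is to exhibit $\mathbb{T}_m^q(Q)$ as an affine subspace of $\mathbf{R}^{m\times m}$, since any affine subspace is automatically both convex and closed. To this end, I would first collect all the conditions that a matrix $C = [c_{i,j}]$ must satisfy to belong to $\mathbb{T}_m^q(Q)$ and observe that each of them is an affine equality constraint on the entries $c_{i,j}$: the vanishing of the first $q$ columns ($C_{:,:q} = 0_{m\times q}$), the upper-triangularity of $C_{q:,q:}$ (entries below the diagonal equal zero), the Toeplitz structure on $C_{:,q:}$ (equality of entries along each diagonal), the bandwidth restriction (entries outside the band equal zero), and the normalization condition that the diagonal of $C_{q:,q:}$ equals one. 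The first four are homogeneous linear equalities, and the last is an inhomogeneous (affine) one.

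The key observation --- and the only point that requires care --- is that the invertibility requirement built into the definition of a causal matrix imposes no further constraint. Since $C_{q:,q:}$ is upper-triangular with unit diagonal (by normalization), its determinant equals one, so it is invertible regardless of the values of the remaining free entries. Consequently, invertibility is already implied by the affine constraints listed above and does not introduce any open condition (such as $\det \neq 0$) that could destroy convexity or closedness.

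Having reduced the description of $\mathbb{T}_m^q(Q)$ to a finite system of affine equalities, I would conclude as follows. The set is the solution set of a linear system $L(C) = b$, where $L\colon \mathbf{R}^{m\times m}\to\mathbf{R}^N$ is the linear map stacking the left-hand sides of the constraints and $b\in\mathbf{R}^N$ records the right-hand sides; such a set is the preimage $L^{-1}(\{b\})$ of a point under a continuous map, hence closed. For convexity, if $C, C' \in \mathbb{T}_m^q(Q)$ and $\lambda \in [0,1]$, then $\lambda C + (1-\lambda)C'$ satisfies every zero constraint (since $0$ is preserved under convex combinations), the unit-diagonal constraint (since $\lambda\cdot 1 + (1-\lambda)\cdot 1 = 1$), the Toeplitz equalities, and the bandwidth condition; moreover it remains upper-triangular with unit diagonal and hence invertible, so it lies in $\mathbb{T}_m^q(Q)$.

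I do not anticipate a genuine obstacle here, which is consistent with the authors' remark that the proof is straightforward. The single subtlety is the recognition that the normalization makes invertibility automatic; without the unit-diagonal assumption the invertible upper-triangular matrices would form only an open (and therefore nonclosed) set, so this hypothesis is exactly what makes the statement true.
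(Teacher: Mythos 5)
Your proof is correct: the paper omits its proof as ``straightforward,'' and your reduction of $\mathbb{T}_m^q(Q)$ to the solution set of finitely many affine equations---with the key observation that the normalization (unit diagonal on $C_{q:,q:}$) makes the invertibility requirement automatic rather than an open condition---is exactly the intended argument. The only unaddressed edge case is $Q=0$, where the unit-diagonal and bandwidth constraints conflict and the set is empty, but the empty set is still convex and closed, so the statement holds trivially there.
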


%%%%%%%%%%%%%%%%%%%%%%%%%%%%%%%%%%%%%%%%%%%%%%%%%%%%%%%%%%%%%%%%%%%%%%%%%%%%%%%%
%%%%%%%%%%%%%%%%%%%%%%%%%%%%%%%%%%%%%%%%%%%%%%%%%%%%%%%%%%%%%%%%%%%%%%%%%%%%%%%%
% Sect. III # Optimization Problem
%%%%%%%%%%%%%%%%%%%%%%%%%%%%%%%%%%%%%%%%%%%%%%%%%%%%%%%%%%%%%%%%%%%%%%%%%%%%%%%%
%%%%%%%%%%%%%%%%%%%%%%%%%%%%%%%%%%%%%%%%%%%%%%%%%%%%%%%%%%%%%%%%%%%%%%%%%%%%%%%%

\section{Problem Settings}
\label{sec:optimization}

%%%%%%%%%%%%%%%%%%%%%%%%%%%%%%%%%%%%%%%%%%%%%%%%%%%%%%%%%%%%%%%%%%%%%%%%%%%%%%%%
% Sect. III.A # Time-series data modeling
%%%%%%%%%%%%%%%%%%%%%%%%%%%%%%%%%%%%%%%%%%%%%%%%%%%%%%%%%%%%%%%%%%%%%%%%%%%%%%%%

\rev{In this section, we formulate the system identification in NOMADS.
We first introduce a class of non-Markovian dynamical systems,
and then postulate the system identification as an optimization problem
using multiple multi-dimensional time-series data.}

\subsection{Linear Dynamical Systems with Homogeneous Memory}

\begin{comment}
  \begin{defn}[Causality]\label{defn:optimization:causality}
    Assume that we have a positive integer $m$ and a discrete-time nonautonomous dynamical system $\phi: \hom([m], \mathbf{R}^n) \times \hom([m], \mathbf{R}^k)\times[m] \to \mathbf{R}^n$ such that 
  
    If $\phi$ satisfies the following, we call $\phi$ a \textit{causal} dynamical system: $x(t)$ is only dependent on $\qty(\bx_{t'})_{t'\in[t]}$ and $\qty(\bu_{t'})_{t'\in[t]}$.
  \end{defn}

  %\begin{rem}
    %Under the settings of Definition \ref{defn:optimization:causality}, we can further assume that the time-evolution (\ref{eq:optimization:causal:time:evolution}) needs $q+1\geq1$ independent initial conditions. The value $q$ corresponds to 
  %\end{rem}

  Hereafter we call the dynamical system (\ref{eq:optimization:causal:time:evolution}) a \textit{model}. A model together with initial conditions $\qty(\bx_{t})_{t\in[q+1]}$ corresponding to the first $q+1$ time points and control inputs $\qty(\bu_{t})_{t\in[m]}$ gives the entire time-evolution of the target system.
\end{comment}

In this section, we consider discrete-time dynamical systems with states
$\bx_t \in \mathbf{R}^n$ and inputs $\bu_t \in \mathbf{R}^k$ for
$t \in \mathbf{Z}_{\ge 0}$.
We introduce a class of linear non-Markovian models with \rev{
\emph{spatially-homogeneous memory} (hereafter referred to as homogeneous memory)}, which forms the modeling basis of the proposed method.

%\begingroup
%\color{black}
\begin{defn}
\label{defn:Time:invariant:non-Markovian:state-space model}
\textit{(Linear dynamical systems with homogeneous\\ memory):}\quad Assume that $q+1$ initial states $(\bx_t)_{t\in[q+1]}$ and inputs
$(\bu_t)_{t\in[m]}$ with $m \ge q$ are given.
For $t \in [q+1,m+1]$, the state evolution satisfies
\begin{equation}\label{eq:preliminaries:non-Markovian:time:evolution:model}
  YD = AX + BU,
\end{equation}
where $(A,B,D) \in
\mathbf{R}^{n\times n} \times \mathbf{R}^{n\times k} \times \mathbb{T}_m^{q}(Q)$.
Here, 
\begin{align}
X = (\bx_t)_{t\in[m]}^{q}\in \mathbf{R}^{n\times m}, ~
Y = (\bx_{t+1})_{t\in[m]}^{q}\in \mathbf{R}^{n\times m}, \\
U = (\bu_t)_{t\in[m]}^{q}\in \mathbf{R}^{k\times m},
\end{align}
and $D$ has embedding dimension $1 \le Q \le m$.
We call \eqref{eq:preliminaries:non-Markovian:time:evolution:model}
a \emph{linear dynamical system with homogeneous memory}, and refer to $D$
as the \emph{memory kernel matrix}.
\end{defn}
This class of dynamical system is said to have \textit{homogeneous memory} because the memory kernel is spatially homogeneous (independent of the spatial degree of freedom).
%\endgroup
\begin{comment}
%\begingroup
%\color{black}
\begin{rem}\label{rem:assumptions:in:time-invariant:non-Markovian:state-space:model}
The non-Markovian LTI dynamics with spatially-homogeneous memory defined above is a subclass of more general linear model 
    \begin{equation}\label{eq:gen:lin}
        x_{j,s}\mathcal{D}_{i,j,t,s} = A_{i,j}x_{j,t} + B_{i,j}u_{j,t},
    \end{equation}
    where $x_{i,t}, u_{i,t}$ are the $i$th elements of $\bx_t$ and $\bu_t$, respectively, $A\in \mathbf{R}^{n\times n}, B\in \mathbf{R}^{n\times k}$ are matrices, and $ \mathcal{D}\in \mathbf{R}^{n\times n\times m \times m}$ is a tensor. Here, Einstein's summation convention is used. We can confirm that difference equations (e.g., $\bx(t+1) - \bx(t) = A\bx(t) + B\bu(t)$) are subclasses of the general linear model \ref{eq:gen:lin}.   
    
    A non-Markovian LTI dynamics with spatially-homogeneous memory is obtained by the following assumptions:
    \begin{enumerate}
        \item The tensor $\mathcal{D}_{i,j,t,s}$ is only dependent on the difference of the indices $t-s$ (time-invariance)
        \item The tensor $\mathcal{D}_{i,j,t,s}$ is proportional to $\delta_{i,j}$ (spatially-homogeneous memory)
    \end{enumerate}
\end{rem}
%\endgroup
\end{comment}

\begin{rem}[Constraint]\label{rem:preliminaries:constraint}
  In some cases, we have \textit{a priori} knowledge on the coefficient matrices $A, B, D$ in the linear dynamical systems with homogeneous memory (\ref{eq:preliminaries:non-Markovian:time:evolution:model}), and we can reflect the knowledge to the set of parameters. For example, if we know that all the elements in $A$ are nonnegative, we can impose a \textit{constraint} $A \in \qty(\mathbf{R}_{\geq0})^{n\times n}$. 

  %\begingroup
%\color{black}
  In general, an \textit{a priori} knowledge leads to a constraint of the form $(A,B,D) \in \mathbb{E}$ where $\mathbb{E}$ is a subset of $\mathbf{R}^{n\times n}\times\mathbf{R}^{n\times k}\times \mathbb{T}_m^{q}(Q)$. If the subset $\mathbb{E}$ is a closed convex subset, the analysis of the constraint becomes easy. We list some of the closed convex constraints below:
    \begin{itemize}
        \item Element-wise equality: e.g., $A_{ii} = 0$ for $i\in[n]$ 
        \item Element-wise inequality: e.g., $A_{ii} \geq 0$ for $i\in[n]$ 
        \item Symmetry: e.g., $A=A^\top$ 
        %\item (In)equality on matrix ranks: e.g., $\mathrm{rank}(A) \geq 2$ (non convex)
    \end{itemize}
  
\end{rem}
%\endgroup

%%%%%%%%%%%%%%%%%%%%%%%%%%%%%%%%%%%%%%%%%%%%%%%%%%%%%%%%%%%%%%%%%%%%%%%%%%%%%%%%
% Sect. II.B # 
%%%%%%%%%%%%%%%%%%%%%%%%%%%%%%%%%%%%%%%%%%%%%%%%%%%%%%%%%%%%%%%%%%%%%%%%%%%%%%%%

\subsection{Linear Systems with Homogeneous Memory and ARX Models}\label{sec:optimization:hankel}

A linear dynamical systems with homogeneous memory (\ref{eq:preliminaries:non-Markovian:time:evolution:model}) is transformed to an autoregressive exogenous (ARX) model. For $t \in[q+1:m]$,
\begin{equation}\label{eq:optimization:ARX:scaler}
  \bx_{t} = A\bx_{t-1} - \sum_{k\in[1:Q]}d_{t-k,t}\bx_{t-k} + B\bu_{t-1},
\end{equation}
where $Q$ is the embedding dimension, $D = \qty(d_{ij})$ and $\bx_t = 0_{n\times1}$ for $t < 0$.

  %\begingroup
%\color{black}
The model can be further transformed into a Hankelized state-space representation
by stacking snapshot vectors to form extended state variables \cite{schmid2010dynamic}.
However, this increases the state dimension from $n$ to $Qn$, leading to coefficient
matrices of size $Qn \times Qn$ and a computational cost that scales as $\mathcal{O}((Qn)^3)$ when
pseudoinverse-based identification is employed.

\subsection{Identification of Dynamical Systems}
\label{sec:optimization:identification}

%\begingroup
%\color{black}

In large-scale systems, fully exciting all degrees of freedom through
experiments is rarely feasible.
As a result, each experiment typically excites only a subset of the
state--input space, yielding \emph{partially excited} trajectories.

Under partial excitation, system identification becomes ill-posed.
For a single trajectory, the snapshot matrix $X$ and input matrix $U$
generally span a low-dimensional subspace, so that the regression matrix
$(X^\top,U^\top)^\top$ is not full row rank.
This issue is directly reflected in the DMDc regression
\begin{equation}\label{eq:dmdc:single}
  \min_{(A~B)\in\mathbf{R}^{n\times(n+k)}} 
  \left\| Y - \mqty(A~B)\mqty(X\\U) \right\|,
\end{equation}
which admits infinitely many solutions whenever $(X^\top,U^\top)^\top$ is
rank-deficient:
\begin{equation}
  \mqty(A~B)
  = Y\mqty(X\\U)^+
  + M\mqty(\id_{n+k}-\mqty(X\\U)\mqty(X\\U)^+),
\end{equation}
with an arbitrary matrix $M\in\mathbf{R}^{n\times(n+k)}$, which is set to zero in DMDc framework. Only when $(X^\top ~ U^\top)^\top$ is full row rank (corresponding to a fully
excited experiment in the noiseless case) does the solution become unique.

A common approach is to aggregate multiple experiments by stacking the data, $
X=(X_\mu)_{\mu\in[N]},
X'=(X'_\mu)_{\mu\in[N]},
U=(U_\mu)_{\mu\in[N]}$,
which can alleviate rank deficiency caused by individual trajectories, and is equivalent to multi-trajectory DMD \cite{anzaki2024multi} without constraints.
However, in high-dimensional systems, even this multi-trajectory approach may remain
insufficient, and naive stacking alone does not ensure reliable
generalization.

The difficulty is more pronounced for non-Markovian systems.
Although ARX models~\cite{aastrom1971system} and Hankelized representations
(Section~\ref{sec:optimization:hankel}) can represent memory effects, their
parameter dimension grows rapidly with system size or memory length, and
incorporating physical constraints is generally nontrivial.

These observations motivate a modeling framework that
(i) systematically exploits multiple partially excited trajectories and
(ii) directly incorporates physical constraints into the identification problem of non-Markovian dynamical systems, which is the focus of the proposed NOMADS framework.

%\endgroup

%%%%%%%%%%%%%%%%%%%%%%%%%%%%%%%%%%%%%%%%%%%%%%%%%%%%%%%%%%%%%%%%%%%%%%%%%%%%%%%%
% Sect. III.B # Statement of Problem
%%%%%%%%%%%%%%%%%%%%%%%%%%%%%%%%%%%%%%%%%%%%%%%%%%%%%%%%%%%%%%%%%%%%%%%%%%%%%%%%
\subsection{Statement of Problem in NOMADS}

%\begingroup
%\color{black}

In this subsection, we formulate the system identification problem addressed by NOMADS as a constrained optimization problem.
We consider multiple time-series trajectories obtained from possibly different experimental conditions,
and seek to identify a single global dynamical model that is consistent with all trajectories
while incorporating structural or physical constraints.

%\endgroup

Assume we have a set of tuples of time-series data $\mathscr{D}_N$ defined below:
\begin{align}\label{eq:optimization:time:series:data}
  & \mathscr{D}_N \coloneq \left\{\mathscr{D}^{(\mu)}\middle| {\mu\in[N]}\right\},\\
  & \mathscr{D}^{(\mu)} \coloneq \qty(\qty(\bx^{(\mu)}_{t})_{t\in[m_{\mu}]}, \qty(\bu^{(\mu)}_{t})_{t\in[m_{\mu}]}).
\end{align}

%\begingroup
%\color{black}

Each $\mathscr{D}^{(\mu)}$ is called a \textit{trajectory}, and $N$ denotes the number of trajectories.
The integer $m_{\mu} > 2$ represents the length of the $\mu$-th trajectory.
We fix a positive integer $m < \min_{\mu}(m_{\mu}) - 1$, which determines the number of snapshots used for identification.

%\endgroup

We consider the following optimization problem:
\begin{equation}\label{eq:optimization:optimization:problem}
  \begin{split}
  & \mathrm{Minimize} \quad f(A,B,D)\coloneq \sum_{\mu=0}^{N-1}\left\|Y_{\mu}D - \qty(AX_{\mu} + BU_{\mu}) \right\|^2 \\
  & \mathrm{subject~to} \quad (A, B, D) \in \mathrm{St}(A)\times \mathrm{St}(B)\times \mathrm{St}(D),
  \end{split}
\end{equation}
where 
\begin{align}
  X_{\mu} &= \mqty(0_{n\times q} & \bx_{q}^{(\mu)} & \bx_{q+1}^{(\mu)} & \cdots & \bx_{m-1}^{(\mu)}) \in \mathbf{R}^{n\times m}, \\
  Y_{\mu} & =\mqty(0_{n\times q} & \bx_{q+1}^{(\mu)} & \bx_{q+2}^{(\mu)} & \cdots & \bx_{m}^{(\mu)}) \in \mathbf{R}^{n\times m},\\
  U_{\mu} &= \mqty(0_{n\times q} & \bu_{q}^{(\mu)} & \bu_{q+1}^{(\mu)} & \cdots & \bu_{m-1}^{(\mu)}) \in \mathbf{R}^{k\times m},
\end{align}
are data matrices constructed from each trajectory.

%\begingroup
%\color{black}

The sets $\mathrm{St}(A)\subseteq \mathbf{R}^{n\times n}$,
$\mathrm{St}(B)\subseteq \mathbf{R}^{n\times k}$,
and $\mathrm{St}(D)\subseteq \mathbb{T}_m^{q}$
are closed convex subsets encoding \textit{a priori} knowledge of the system parameters,
as discussed in Remark~\ref{rem:preliminaries:constraint}.
We denote the feasible set by
$\mathbb{E} \coloneq \mathrm{St}(A)\times \mathrm{St}(B)\times \mathrm{St}(D)$. 

An optimal solution to (\ref{eq:optimization:optimization:problem}) is expected to exist in practice,
since $\mathbb{E}$ can be chosen as a bounded set.
For reference, we also define the unconstrained parameter space
$\mathbb{E}_0\coloneq \mathbf{R}^{n\times n}\times \mathbf{R}^{n\times k}\times \mathbb{T}_m^{q}$.

%\endgroup

\begin{comment}
    If $N > 1$, the optimization problem (\ref{eq:optimization:optimization:problem}) corresponds to multi-trajectory system identification of (\ref{eq:preliminaries:non-Markovian:time:evolution:model}),
    while the case $N=1$ reduces to single-trajectory identification.
\end{comment}

%\begingroup
%\color{black}

The identification problem~\eqref{eq:optimization:optimization:problem}
is formulated to handle collections of experiments in which each trajectory
may be only partially excited.
The identified model admits an ARX-type representation
via~\eqref{eq:optimization:ARX:scaler}, while keeping the number of free
parameters bounded by $n^2 + nk + m$, which is smaller than that of standard
ARX models or Hankelized state-space representations.

This reduction follows from the assumption of \emph{spatially homogeneous memory}.
While this structural assumption enables scalable identification under partial
excitation, it may limit approximation accuracy when the true memory kernel
exhibits strong spatial heterogeneity.

%\endgroup

\begin{rem}
  If $\mathbb{E} = \mathbb{E}_0$, we refer to (\ref{eq:optimization:optimization:problem}) as an \textit{unconstrained} optimization problem.
  If $\mathbb{E} \subsetneq \mathbb{E}_0$, it is called a \textit{constrained} optimization problem.
  The latter corresponds to system identification with structural or physical constraints.
\end{rem}

\begin{rem}
  Several existing DMD-based methods can be recovered as special cases of
  (\ref{eq:optimization:optimization:problem}), as summarized in Table~\ref{tab1}.
  We use the shorthand notation \rev{$X' = \mqty(\bx_{t+1})_{t\in[m]}, X^0 = \mqty(\bx_{t})_{t\in[m]}$.}

  \begin{table}
  \caption{DMD-based methods.}
  \label{tab1}
  \centering
  \setlength{\tabcolsep}{3pt}
  \begin{tabular}{p{40pt}|p{25pt}|p{25pt}|p{25pt}|p{25pt}|p{30pt}|p{30pt}}
  \hline
  Method              & $Y$      & $U$            & $q$      & $\mathrm{St}(A)$              & $\mathrm{St}(B)$         & $\mathrm{St}(D)$\\
  \hline
  DMD \cite{tu2014dynamic}     & \rev{$X'$}   & $0$  & $0$ & $\mathbf{R}^{n\times n}$    & $\{0\}$                   & $\{\id_m\}$\\
  DMDc \cite{proctor2016dynamic}    & \rev{$X'$}   & $U$   & $0$ & $\mathbf{R}^{n\times n}$    & $\mathbf{R}^{n\times k}$  & $\{\id_m\}$\\
  DMDm \cite{anzaki2023dynamic}    & \rev{$X^0$}   & $U$   & $q$ & $\mathbf{R}^{n\times n}$    & $\mathbf{R}^{n\times k}$  & $\mathbb{T}_m^q$\\
  \hline
  \end{tabular}
  \end{table}
\end{rem}

%%%%%%%%%%%%%%%%%%%%%%%%%%%%%%%%%%%%%%%%%%%%%%%%%%%%%%%%%%%%%%%%%%%%%%%%%%%%%%%%
%%%%%%%%%%%%%%%%%%%%%%%%%%%%%%%%%%%%%%%%%%%%%%%%%%%%%%%%%%%%%%%%%%%%%%%%%%%%%%%%
% Sect. IV. # Optimization Scheme
%%%%%%%%%%%%%%%%%%%%%%%%%%%%%%%%%%%%%%%%%%%%%%%%%%%%%%%%%%%%%%%%%%%%%%%%%%%%%%%%
%%%%%%%%%%%%%%%%%%%%%%%%%%%%%%%%%%%%%%%%%%%%%%%%%%%%%%%%%%%%%%%%%%%%%%%%%%%%%%%%

\section{Convex Optimization Scheme}
\label{sec:convex}

%\begingroup
%\color{black}
This section presents a convex optimization algorithm NOMADS for solving
(\ref{eq:optimization:optimization:problem}).
We show that the loss function is convex and Lipschitz smooth, enabling the use of
projected gradient descent (PGD) with a global sublinear convergence rate.
%\endgroup

%%%%%%%%%%%%%%%%%%%%%%%%%%%%%%%%%%%%%%%%%%%%%%%%%%%%%%%%%%%%%%%%%%%%%%%%%%%%%%%%
% Sect. III.C # Properties of the Optimization Problem
%%%%%%%%%%%%%%%%%%%%%%%%%%%%%%%%%%%%%%%%%%%%%%%%%%%%%%%%%%%%%%%%%%%%%%%%%%%%%%%%
\subsection{Properties of the Optimization Problem}
\label{sec:scheme:properties}
%\begingroup
%\color{black}
In this subsection, we analyze fundamental properties of the optimization problem
(\ref{eq:optimization:optimization:problem}).
In particular, we derive explicit expressions for the gradient and Hessian of the loss function,
and establish convexity and smoothness, which form the theoretical basis for the PGD algorithm.
%\endgroup
\begin{thm}
  The gradient $\nabla f(A, B, D)$ and the Hessian $\Hess f(A, B, D)$ of the function $f$ are given by the followings:
  \begin{equation}\label{eq:optimization:gradient}
      \nabla f(A, B, D) = \sum_{\mu=0}^{N-1}\qty(-2E_{\mu}X_{\mu}^\top, -2E_{\mu}U_{\mu}^\top, 2Y_{\mu}^\top E_{\mu}),
  \end{equation}
  where $E_{\mu} = Y_{\mu}D - (AX_{\mu} + BU_{\mu})$, and
  \begin{equation}\label{eq:optimization:Hessian}
    \begin{split}
      & \Hess f(A, B, D)[(\Delta_A, \Delta_B, \Delta_D)]\\ &= \sum_{\mu=0}^{N-1}\qty(-2e_{\mu}X_{\mu}^\top, -2e_{\mu}U_{\mu}^\top, 2Y_{\mu}^\top e_{\mu}),
    \end{split}
  \end{equation}
  where $e_{\mu} = Y_{\mu}\Delta_D - (\Delta_AX_{\mu} + \Delta_BU_{\mu})$. Also, the loss function $f$ is a convex function. 
\end{thm}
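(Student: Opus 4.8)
The plan is to compute the first and second Gateaux derivatives of $f$ directly, read off the gradient and Hessian from them, and then deduce convexity from the explicit form of the Hessian. Throughout I abbreviate $E_{\mu} = Y_{\mu}D - (AX_{\mu} + BU_{\mu})$ so that $f(A,B,D) = \sum_{\mu}\langle E_{\mu}, E_{\mu}\rangle$ in terms of the trace inner product (\ref{eq:preliminaries:inner:prod:matrix}).

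First I would perturb the argument to $(A+t\Delta_A,\, B+t\Delta_B,\, D+t\Delta_D)$ and note that $E_{\mu}$ depends affinely on $(A,B,D)$, so it shifts to $E_{\mu}+t\,e_{\mu}$ with $e_{\mu} = Y_{\mu}\Delta_D - (\Delta_A X_{\mu} + \Delta_B U_{\mu})$. Expanding $\langle E_{\mu}+t e_{\mu},\, E_{\mu}+t e_{\mu}\rangle$ and differentiating at $t=0$ gives the directional derivative $2\sum_{\mu}\langle E_{\mu}, e_{\mu}\rangle$. The essential step is to rewrite this as a tuple inner product (\ref{eq:preliminaries:metric}) paired against $(\Delta_A,\Delta_B,\Delta_D)$ by moving each coefficient matrix onto the other factor via cyclicity of the trace, namely $\langle E_{\mu}, \Delta_A X_{\mu}\rangle = \langle E_{\mu}X_{\mu}^\top, \Delta_A\rangle$, $\langle E_{\mu}, \Delta_B U_{\mu}\rangle = \langle E_{\mu}U_{\mu}^\top, \Delta_B\rangle$, and $\langle E_{\mu}, Y_{\mu}\Delta_D\rangle = \langle Y_{\mu}^\top E_{\mu}, \Delta_D\rangle$. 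Reading off the matrix paired with each direction then yields (\ref{eq:optimization:gradient}).

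For the Hessian I would differentiate (\ref{eq:optimization:gradient}) a second time in the direction $(\Delta_A,\Delta_B,\Delta_D)$. Since the gradient depends on $(A,B,D)$ only through the affine quantity $E_{\mu}$, this differentiation simply replaces every $E_{\mu}$ by its derivative $e_{\mu}$, giving (\ref{eq:optimization:Hessian}) at once; in particular the Hessian is independent of the base point, so $f$ is a quadratic functional.

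Finally, for convexity I would contract the Hessian against its own direction and reverse the adjoint identities used above: $\langle \Hess f(A,B,D)[(\Delta_A,\Delta_B,\Delta_D)],\, (\Delta_A,\Delta_B,\Delta_D)\rangle$ collapses to $2\sum_{\mu}\langle e_{\mu}, e_{\mu}\rangle = 2\sum_{\mu}\|e_{\mu}\|^2 \ge 0$. As $f$ is twice continuously differentiable with an everywhere positive semidefinite Hessian, it is convex; equivalently, one may observe that each summand $\|E_{\mu}\|^2$ is the squared norm of an affine map of $(A,B,D)$ and hence convex. I do not expect any genuine obstacle; the only point requiring care is the transpose bookkeeping when sliding $X_{\mu}$, $U_{\mu}$, and $Y_{\mu}$ across the trace, so that the correct adjoint is matched with the correct direction in the gradient tuple.
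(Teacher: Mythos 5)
Your proposal is correct and follows essentially the same route as the paper's own proof: linearize the affine residual $E_{\mu}\mapsto E_{\mu}+e_{\mu}$, use cyclicity of the trace to identify the gradient components, obtain the Hessian by differentiating the (affine-in-$E_{\mu}$) gradient once more, and conclude convexity from $\langle \Hess f[\Delta],\Delta\rangle = 2\sum_{\mu}\|e_{\mu}\|^2\ge 0$. No gaps; the transpose bookkeeping you flag is exactly the identity $\tr(E^\top\Delta_A X)=\tr(XE^\top\Delta_A)$ the paper invokes.
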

\begin{proof}
  First we prove the expression for the gradient. For $(\Delta_A, \Delta_B, \Delta_D)\in\mathbf{R}^{n\times n}\times \mathbf{R}^{n\times k}\times \mathbf{R}^{m\times m}$, using the definitions (\ref{eq:preliminaries:inner:prod:matrix}) and (\ref{eq:preliminaries:metric:tuple}),  
  \begin{equation}
    \begin{split}
     &f(A+\Delta_A, B+\Delta_B, D+\Delta_D) - f(A, B, D)\\  =& 2\sum_{\mu
=0}^{N-1}\qty(-\tr\qty(E_{\mu}^\top \Delta_AX_{\mu}) - \tr\qty(E_{\mu}^\top \Delta_BU_{\mu}) + \tr\qty(E_{\mu}^\top Y_{\mu}\Delta_D))\\ & \quad+ \mathcal{O}(\rev{\Delta_A^2+\Delta_B^2+ \Delta_D^2}).
    \end{split}
  \end{equation}
  Use of the identities $\tr(E_{\mu}^\top \Delta_AX_{\mu}) = \tr(X_{\mu}E_{\mu}^\top \Delta_A)$ and $\tr(E_{\mu}^\top \Delta_BU_{\mu}) = \tr(U_{\mu}E_{\mu}^\top \Delta_B)$ yields the expression for the gradient (\ref{eq:optimization:gradient}). 
  
  Second, we prove the expression for Hessian. Noting that the Hessian is the gradient of $\nabla f(A,B,D)$ and
  \begin{equation}
    \begin{split}
      &\nabla f(A+\Delta_A,B+\Delta_B,D+\Delta_D) - \nabla f(A,B,D)\\
      &= \sum_{\mu=0}^{N-1}\qty(-2e_{\mu}X_{\mu}^\top, -2e_{\mu}U_{\mu}^\top, 2Y_{\mu}^\top e_{\mu}),
    \end{split} 
  \end{equation}
  we obtain (\ref{eq:optimization:Hessian}).
  Third, we show the convexity of $f$. From (\ref{eq:preliminaries:inner:prod:matrix}) and (\ref{eq:optimization:Hessian}), 
  \begin{equation}
    \begin{split}
    & \langle \Hess f(A, B, D)[(\Delta_A, \Delta_B, \Delta_D)], (\Delta_A, \Delta_B, \Delta_D)\rangle \\ &= 2\sum_{\mu=0}^{N-1}\|e_{\mu}\|^2 \geq 0.  
    \end{split}.
  \end{equation}
  Therefore, the loss function $f$ is a convex function \cite{roberts1974convex}.
\end{proof}

Note that, the Hessian is a \textit{sum over trajectories} and we can enhance the condition number of the Hessian by adding new trajectories.

\begin{comment}
  
\begin{thm}\label{thm:optimization:scheme:quadratic}
  If $\mathbb{E} = \mathbb{E}_0$, the optimization problem (\ref{eq:optimization:optimization:problem}) has the set of minimizers
  \begin{equation}\label{eq:optimization:minimizers:unconstrained}
    \left\{-\Hess f(0)^{+}[\nabla f(0)] + \lambda \mid \lambda \in \ker(\Hess f(0))\right\},
  \end{equation}
  where, $\Hess f(0) \coloneq \Hess f(0_{n\times n},0_{n\times k},\id_m^q)$ and $\nabla f(0) \coloneq \nabla f(0_{n\times n},0_{n\times k},\id_m^q)$.
\end{thm}
\end{comment}

In many applications, we are interested in the uniqueness of the coefficient matrices. 
\begin{thm}
  The optimization problem (\ref{eq:optimization:optimization:problem}) has a unique minimizer if $\Hess f(A,B,D)$ is positive definite.
\end{thm}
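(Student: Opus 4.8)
The plan is to exploit the fact that $f$ is a quadratic function in order to upgrade the convexity already established into \emph{strict} convexity, and then to invoke the standard fact that a strictly convex function attains its minimum at most once on a convex set. The first thing I would record is that the expression for $\Hess f(A,B,D)[(\Delta_A,\Delta_B,\Delta_D)]$ derived above does not depend on the base point $(A,B,D)$; it depends only on the direction $(\Delta_A,\Delta_B,\Delta_D)$. Hence $f$ has a constant Hessian, and its second-order Taylor expansion is exact, with no higher-order remainder:
$$f(z+\Delta) = f(z) + \langle \nabla f(z),\Delta\rangle + \tfrac12\langle \Hess f[\Delta],\Delta\rangle .$$

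Second, I would convert positive definiteness into strict convexity. Writing $z_i=(A_i,B_i,D_i)$ and using the exact expansion, a routine computation gives the polarization-type identity
$$\theta f(z_1) + (1-\theta) f(z_2) - f\bigl(\theta z_1 + (1-\theta) z_2\bigr) = \tfrac12\,\theta(1-\theta)\,\langle \Hess f[z_1 - z_2],\, z_1 - z_2\rangle ,$$
valid for all $z_1,z_2$ and $\theta\in(0,1)$ precisely because the Hessian is constant. By hypothesis $\langle \Hess f[\Delta],\Delta\rangle>0$ for every $\Delta\neq0$, so whenever $z_1\neq z_2$ the right-hand side is strictly positive, which is exactly strict convexity of $f$. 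Here $z_1-z_2$ is a nonzero element of the ambient space $\mathbf{R}^{n\times n}\times\mathbf{R}^{n\times k}\times\mathbf{R}^{m\times m}$, so positive definiteness on the whole space applies directly; in fact only positive definiteness along feasible directions is needed, but the stated hypothesis suffices.

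Third, I would conclude uniqueness by the usual midpoint argument. Since $\mathbb{E}=\mathrm{St}(A)\times\mathrm{St}(B)\times\mathrm{St}(D)$ is convex, suppose toward a contradiction that $z_1\neq z_2$ are both minimizers attaining the common minimum value $f^\star$. The midpoint $\tfrac12(z_1+z_2)$ is feasible, and strict convexity gives $f\bigl(\tfrac12(z_1+z_2)\bigr) < \tfrac12 f(z_1) + \tfrac12 f(z_2) = f^\star$, contradicting minimality. Hence the minimizer is unique.

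I do not anticipate a genuine obstacle: once the Hessian is recognized as a constant positive-definite form, the result is immediate. The only point requiring a little care is the interplay between existence and uniqueness, since the statement asserts a \emph{unique} minimizer rather than merely at most one. I would therefore make sure existence is in hand, either through the compactness of a bounded $\mathbb{E}$ as designed in practice, or equivalently by noting that a constant positive-definite Hessian renders $f$ strongly convex and hence coercive on the closed set $\mathbb{E}$, which guarantees that the infimum is attained.
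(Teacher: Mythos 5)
Your proof is correct and follows essentially the same route as the paper: positive definiteness of the (constant) Hessian gives strong/strict convexity, from which uniqueness follows by the standard argument, with existence secured by coercivity. The paper's own proof is just a terser version of this, asserting strong convexity and bounded sublevel sets and citing a textbook for the conclusion, whereas you spell out the polarization identity and midpoint argument explicitly.
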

\begin{proof}
  The function $f$ is a strongly convex function, because Hess $f(A,B,D)$ is positive definite.
  Thus, the sub-level set of $f$ is a bounded subset. 
  This implies that the optimization problem (\ref{eq:optimization:optimization:problem}) has a unique minimizer \cite{bertsekas2016nonlinear}.
\end{proof}

In general, the minimizer of (\ref{eq:optimization:optimization:problem}) is not unique, because the Hessian (\ref{eq:optimization:Hessian}) has nonempty kernel. 

\begin{comment}    
\begin{rem}
  In the optimization problem (\ref{eq:optimization:optimization:problem}), assume that  $\mathrm{St}(A) = \mathbf{R}^{n\times n}$, $\mathrm{St}(B) = \mathbf{R}^{n\times k}$, and $\mathrm{St}(D) = \{D\}$ for a matrix $D\in\mathbf{R}^{m\times m}$. Further assume that a symmetric matrix $H(A,B)\in\mathbf{R}^{(n+k)\times(n+k)}$ satisfies
  \begin{equation}
      \mqty(\Delta_A & \Delta_B) H(A,B)\mqty(\Delta_A^\top \\ \Delta_B^\top)\\ = \left\langle\theta, \Hess f(A,B,D)\qty[\theta]\right\rangle,
  \end{equation}
  for $\theta = \qty(\Delta_A, \Delta_B, 0)$.
  Using (\ref{eq:optimization:Hessian}), we can show that
  \begin{equation}\label{eq:optimization:hessian:mtdmd}
    H(A,B) = \sum_{\mu=1}^N\mqty(X_{\mu}X_{\mu}^\top & X_{\mu}U_{\mu}^\top \\ U_{\mu}X_{\mu}^\top & U_{\mu}U_{\mu}^\top),   
  \end{equation}
  which is identical to the Hessian derived in \cite{anzaki2024multi}. 
\end{rem}
\end{comment}

\ifarxiv
\begin{rem}
  In general, the Hessian  (\ref{eq:optimization:Hessian}) cannot be expressed as a matrix, because it contains linear maps $\mathbf{R}^{a\times b}\to\mathbf{R}^{c\times d}$ of the form \rev{$\sigma_{P,Q}:M \mapsto P M Q$ for given matrices $P\in\mathbf{R}^{c\times a}, Q\in\mathbf{R}^{b\times d}$ and for any matrix $M\in\mathbf{R}^{a\times b}$}. 
%\begingroup
%\color{black}
On the other hand, when the set for the memory kernel is a singleton,
i.e., $\mathrm{St}(D)=\{D\}$, the Hessian admits a matrix representation.
In this case, the Hessian with respect to $(A,B)$ is given by
\begin{equation}\label{eq:optimization:hessian:mtdmd}
  H(A,B)
  = \sum_{\mu=0}^{N-1}
  \mqty(
    X_{\mu}X_{\mu}^\top & X_{\mu}U_{\mu}^\top \\
    U_{\mu}X_{\mu}^\top & U_{\mu}U_{\mu}^\top
  ),
\end{equation}
which coincides with the Hessian obtained in~\cite{anzaki2024multi}.
%\endgroup
\end{rem}
\fi

%%%%%%%%%%%%%%%%%%%%%%%%%%%%%%%%%%%%%%%%%%%%%%%%%%%%%%%%%%%%%%%%%%%%%%%%%%%%%%%%
% Sect. IV.A # Projected Gradient Descent
%%%%%%%%%%%%%%%%%%%%%%%%%%%%%%%%%%%%%%%%%%%%%%%%%%%%%%%%%%%%%%%%%%%%%%%%%%%%%%%%
\subsection{Projected Gradient Descent Method}

We can solve the optimization problem (\ref{eq:optimization:optimization:problem}) using PGD method shown in Algorithm \ref{alg:NOMADS}. We call Algorithm \ref{alg:NOMADS} NOMADS, standing for \textit{Non-Markovian Optimization-based Modeling for Approximate Dynamics with Spatially-homogeneous memory}.

\begin{algorithm}[t]
  \caption{Algorithm for NOMADS}\label{alg:NOMADS}
  \begin{algorithmic}[1]
  \renewcommand{\algorithmicrequire}{\textbf{Require:}}
  \renewcommand{\algorithmicensure}{\textbf{Output:}}
  \REQUIRE initial stepsize $t_0>0$, constant $\eta > 1$, initial guess $\theta^{(0)}\in\mathbb{E}_0$, convex projection $\mathcal{P}_{\mathbb{E}}$ onto $\mathbb{E}\subset\mathbb{E}_0$, maximum number of steps $p\geq1$ 
   \FOR {$\ell = 0$ to $p-1$}
   \STATE $\theta' \leftarrow \mathcal{P}_{\mathbb{E}}\qty(\theta^{(\ell)} - t_{\ell}\nabla f(\theta^{(\ell)}))$
   \STATE $f' \leftarrow f(\theta^{(\ell)}) + \left\langle (\theta' - \theta^{(\ell)}), \nabla f(\theta^{(\ell)})\right\rangle + (2t_{\ell})^{-1}\left\|\theta' - \theta^{(\ell)}\right\|^2$
   \WHILE {$f(\theta') > f'$}
   \STATE $t_{\ell} \leftarrow t_{\ell} / \eta$
   \STATE $\theta' \leftarrow \mathcal{P}_{\mathbb{E}}\qty(\theta^{(\ell)} - t_{\ell}\nabla f(\theta^{(\ell)}))$
   \STATE $f' \leftarrow f(\theta^{(\ell)}) + \left\langle (\theta' - \theta^{(\ell)}), \nabla f(\theta^{(\ell)})\right\rangle + (2t_{\ell})^{-1}\left\|\theta' - \theta^{(\ell)}\right\|^2$
   \ENDWHILE
   \STATE $\theta^{(\ell+1)} \leftarrow \theta'$
   \STATE $t_{\ell+1} \leftarrow t_{\ell}$
   \ENDFOR
  \RETURN $\theta^{(p)}$
  \end{algorithmic}
\end{algorithm}
In Algorithm \ref{alg:NOMADS}, the convex projection $\mathcal{P}_{\mathbb{E}}$ onto $\mathbb{E}$ is defined as 
\begin{equation}\label{eq:optimization:convex:projection}
  \mathcal{P}_{\mathbb{E}}:\theta \mapsto \mathrm{arg}\min_{\vartheta\in\mathbb{E}}\|\theta-\vartheta\|.
\end{equation}
The exact form of the projection operator $\mathcal{P}_{\mathbb{E}}$ depends on the space $\mathbb{E}$ \cite{beck2017first}. Using (\ref{eq:preliminaries:metric:tuple}) and the fact that $\mathrm{St}(A), \mathrm{St}(B)$, and $\mathrm{St}(D)$ are closed convex sets, we can see that for the convex projection (\ref{eq:optimization:convex:projection})  there exists a unique tuple of convex projections $\mathcal{P}_{\mathrm{St}(A)}:\mathbf{R}^{n\times n}\to\mathrm{St}(A)$, $\mathcal{P}_{\mathrm{St}(B)}:\mathbf{R}^{n\times k}\to\mathrm{St}(B)$, and $\mathcal{P}_{\mathrm{St}(D)}:\mathbf{R}^{m\times m}\to\mathrm{St}(D)$ that satisfies:
\begin{equation}\label{eq:optimization:convex:projection:component}
  \mathcal{P}_{\mathbb{E}}(A,B,D) = \qty(\mathcal{P}_{\mathrm{St}(A)}(A), \mathcal{P}_{\mathrm{St}(B)}(B),
  \mathcal{P}_{\mathrm{St}(D)}(D)).
\end{equation}
For a matrix $D\in\mathbf{R}^{m\times m}$, the convex projection $\mathcal{P}_{\mathbb{T}_m^q(Q)}(D) = \qty(E_{ij})\in\mathbf{R}^{m\times m}$ is expressed as:
\begin{equation}
  E_{ij} = 
    \begin{cases}
      0 & i > j \\
      \qty(m_{ij}-q_{ij})^{-1}\sum_{k=q_{ij}}^{m_{ij}-1}D_{k, k+(j-i)} & i\leq j < i+Q\\
      0 & j\geq i+Q
    \end{cases},
\end{equation}
where $m_{ij} = m-(j-i)$ and $q_{ij} = \max(0, q-(j-i))$.
Note that the projection $\mathcal{P}_{\mathbb{T}_m^q(Q)}$
can be defined due to Theorem \ref{Thm5}.

\begin{comment}
  \begin{lem}
    Define $\mathbb{S}_n$ as the set of $n\times n$ symmetric matrices. For given $P\in\{0,1\}^{n\times n}$, also define, using matrix Hadamard (element-wise) product $\odot$,
    \begin{equation}
      \mathbb{W}_n(P) = \{P\odot A\mid A\in\mathbf{R}^{n\times n}\}.
    \end{equation}
    We list some examples of convex projection below:
    \begin{itemize}
      \item $\mathcal{P}_{(\mathbf{R}_{\geq0})^{n\times n}}: \mathbf{R}^{n\times n} \ni A \mapsto \max(A,0_{n\times n})$,
      \item $\mathcal{P}_{\mathbb{S}_n}: \mathbf{R}^{n\times n} \ni A \mapsto (A + A^\top)/2$,
      \item $\mathcal{P}_{\mathbb{W}_n(P)}: \mathbf{R}^{n\times n} \ni A \mapsto A\odot P$,
    \end{itemize}
    where $\max(-,-)$ is taken element-wise.
  \end{lem}
\end{comment}

%\begingroup
%\color{black}
The function $f$ is Lipschitz smooth as shown in the following theorem, which is used in the proof of Theorem \ref{thm:optimization:Convergence}. 
\begin{thm}[Lipschitz smoothness of $f$]\label{thm:optimization:Lipschitz}
  In the optimization problem (\ref{eq:optimization:optimization:problem}), the function $f$ is $L_{f}$-smooth on $\mathbb{E}_0$, where 
  \begin{equation}\label{eq:optimization:Lipschitz:constant}
    L_{f} \coloneq 2\sum_{\mu=1}^N\qty(\|X_{\mu}\|^2 + \|U_{\mu}\|^2 + \|Y_{\mu}\|^2).  
  \end{equation}
\end{thm}
\begin{proof}
Since the Hessian operator (\ref{eq:optimization:Hessian}) is a constant operator $\Hess f$ independent of $(A, B, D)$,
  \begin{equation}
    \|\nabla f(\theta') - \nabla f(\theta)\| \leq \|\Hess f\| \|\theta - \theta'\|,
  \end{equation}
  for any $\theta = (\Delta_A, \Delta_B, \Delta_D), \theta' \in \mathbb{E}_0$. Here $\|\mathrm{Hess}\,f\|$ denotes the operator norm induced by the Frobenius norm. 

For nonzero $(\Delta_A, \Delta_B, \Delta_D)$, 
\begin{align}\nonumber
&\frac{\|\Hess f[(\Delta_A, \Delta_B, \Delta_D)]\|}{\|(\Delta_A, \Delta_B, \Delta_D)\|}\\ \nonumber
 &= \sqrt{\frac{4\sum_{\mu=1}^N\|e_{\mu}\|^2\qty(\|X_{\mu}\|^2 + \|U_{\mu}\|^2 + \|Y_{\mu}\|^2)}{\|\qty(\Delta_A, \Delta_B, \Delta_D)\|^2}}\\
&\le  2\sum_{\mu=1}^N\qty(\|X_{\mu}\|^2 + \|U_{\mu}\|^2 + \|Y_{\mu}\|^2),
\end{align}
where we used 
\begin{align}\nonumber
\|e_{\mu}\|^2 &= \qty(\|\Delta_A\|^2\|X_{\mu}\|^2+ \|\Delta_B\|^2\|U_{\mu}\|^2+ \|\Delta_D\|^2\|Y_{\mu}\|^2)\\
& \le \|\qty(\Delta_A, \Delta_B, \Delta_D)\|^2\qty(\|X_{\mu}\|^2 + \|U_{\mu}\|^2 + \|Y_{\mu}\|^2)
\end{align}
Therefore $\|\Hess f\| \le L_f$, meaning,
  \begin{equation}
    \|\nabla f(\theta') - \nabla f(\theta)\| \leq L_{f}\|\theta - \theta'\|,
  \end{equation}
  for any $\theta, \theta' \in \mathbb{E}_0$.
\end{proof}
%\endgroup
Now we show the convergence of Algorithm \ref{alg:NOMADS}. 
\begin{thm}[Convergence of the PGD algorithm]\label{thm:optimization:Convergence}
  Assume that (\ref{eq:optimization:optimization:problem}) has an optimal solution. If $p=\infty$, Algorithm \ref{alg:NOMADS} generates a sequence of parameters $\theta_{\ell}\in\mathbb{E} ~ (\ell\in\mathbf{Z}_{\geq\rev{1}})$ that converges to an optimal solution of problem (\ref{eq:optimization:optimization:problem}).
\end{thm}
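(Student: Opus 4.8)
The plan is to run the standard analysis of projected gradient descent with backtracking line search for a convex, Lipschitz-smooth objective, and then upgrade convergence of the function values to convergence of the iterates themselves via Fej\'er monotonicity with respect to the solution set. Throughout I write $\theta^{(\ell)} = (A^{(\ell)}, B^{(\ell)}, D^{(\ell)})$, let $\theta^\star$ denote any optimal solution (which exists by assumption), and abbreviate $f^\star = f(\theta^\star)$. The engine of the proof is the sufficient-decrease condition that every accepted iterate satisfies by construction, namely
\[
  f(\theta^{(\ell+1)}) \leq f(\theta^{(\ell)}) + \left\langle \theta^{(\ell+1)} - \theta^{(\ell)}, \nabla f(\theta^{(\ell)})\right\rangle + (2t_\ell)^{-1}\left\|\theta^{(\ell+1)} - \theta^{(\ell)}\right\|^2,
\]
together with the variational (obtuse-angle) characterization of the convex projection $\mathcal{P}_{\mathbb{E}}$ and the convexity of $f$ established earlier.

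First I would show the backtracking loop terminates and that the stepsizes stay bounded below by a positive constant. By Theorem \ref{thm:optimization:Lipschitz}, $f$ is $L_f$-smooth on $\mathbb{E}_0$, so the descent lemma gives $f(\theta') \leq f(\theta^{(\ell)}) + \langle \theta' - \theta^{(\ell)}, \nabla f(\theta^{(\ell)})\rangle + (L_f/2)\|\theta' - \theta^{(\ell)}\|^2$ for every candidate $\theta'$. Hence the acceptance test in the while loop is automatically satisfied once $t_\ell \leq 1/L_f$, so the loop halts after finitely many divisions by $\eta$. Because the algorithm sets $t_{\ell+1} \leftarrow t_\ell$, the stepsize is only ever decreased, so $\{t_\ell\}$ is non-increasing and the argument above shows it never drops below $t_{\min} \coloneq \min(t_0, 1/(\eta L_f)) > 0$. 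Thus $t_\ell \geq t_{\min}$ for all $\ell$.

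Next I would derive the fundamental progress inequality. Writing $\theta^{(\ell+1)} = \mathcal{P}_{\mathbb{E}}(\theta^{(\ell)} - t_\ell \nabla f(\theta^{(\ell)}))$, the obtuse-angle property of the projection gives $\langle \theta^{(\ell)} - t_\ell \nabla f(\theta^{(\ell)}) - \theta^{(\ell+1)}, z - \theta^{(\ell+1)}\rangle \leq 0$ for all $z \in \mathbb{E}$. Combining this with the gradient inequality for the convex $f$ and the sufficient-decrease condition, a routine manipulation yields, for every $z \in \mathbb{E}$,
\[
  f(\theta^{(\ell+1)}) - f(z) \leq \frac{1}{2t_\ell}\left(\left\|\theta^{(\ell)} - z\right\|^2 - \left\|\theta^{(\ell+1)} - z\right\|^2\right).
\]
Taking $z = \theta^\star$ makes the left side nonnegative, which immediately gives Fej\'er monotonicity $\|\theta^{(\ell+1)} - \theta^\star\| \leq \|\theta^{(\ell)} - \theta^\star\|$, and, after telescoping while using $t_\ell \geq t_{\min}$ and the monotone decrease of $f(\theta^{(\ell)})$, the rate $f(\theta^{(\ell)}) - f^\star \leq \|\theta^{(0)} - \theta^\star\|^2/(2 t_{\min}\,\ell) \to 0$.

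Finally I would promote function-value convergence to convergence of the iterates. Fej\'er monotonicity confines $\{\theta^{(\ell)}\}$ to a bounded set, so it has a subsequence $\theta^{(\ell_k)} \to \bar\theta$; since $\mathbb{E} = \mathrm{St}(A)\times\mathrm{St}(B)\times\mathrm{St}(D)$ is closed we have $\bar\theta \in \mathbb{E}$, and continuity of $f$ together with $f(\theta^{(\ell)}) \to f^\star$ forces $f(\bar\theta) = f^\star$, so $\bar\theta$ is optimal. Applying the Fej\'er inequality with $z = \bar\theta$ shows $\|\theta^{(\ell)} - \bar\theta\|$ is non-increasing and has a subsequence tending to $0$, hence tends to $0$; therefore the whole sequence converges to the optimal point $\bar\theta$. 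I expect the main obstacle to be the bookkeeping behind the uniform lower bound $t_{\min}$ on the stepsizes --- this is exactly where Lipschitz smoothness (Theorem \ref{thm:optimization:Lipschitz}) is indispensable --- and, secondarily, the passage from value convergence to iterate convergence, where the closedness of $\mathbb{E}$ and the Fej\'er argument must be combined carefully rather than merely invoking the $O(1/\ell)$ rate.
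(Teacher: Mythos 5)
Your proposal is correct and follows essentially the same route as the paper: the paper verifies the hypotheses ($L_f$-smoothness via Theorem \ref{thm:optimization:Lipschitz}, convexity, boundedness below, and the assumed existence of a minimizer) and then invokes Theorem 10.24 of Beck's \emph{First-Order Methods in Optimization}, whose proof is exactly the argument you reconstruct --- backtracking termination and the stepsize lower bound $\min(t_0, 1/(\eta L_f))$, the fundamental prox-grad inequality, Fej\'er monotonicity, and the subsequence argument upgrading value convergence to iterate convergence. Your write-up is a correct self-contained expansion of that citation, with no gaps.
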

\begin{proof}
  From Theorem \ref{thm:optimization:Lipschitz}, the function $f$ is $L_f$-smooth for (\ref{eq:optimization:Lipschitz:constant}). 
  \rev{We can also show that $f(\theta) \ge 0$ for any $\theta \in \mathbb{E}$, and that there exists a $\theta \in \mathbb{E}$ such that $f(\theta) < \infty$ provided that the data matrices $X_\mu, Y_\mu, U_\mu$ and the number of trajectories $N$ are finite.}
  %We can also show that $f(\theta)\geq 0 > -\infty$ for any $\theta\in\mathbb{E}$ and $f(\theta) = \sum_{\mu=1}^{N}\|Y_{\mu}\id_m^q\|^2 < \infty$ for a $\theta \in\mathbb{E}$ privided the matrices $X_{\mu}, Y_{\mu}, U_{\mu}$ and the number of trajectories $N$ are finite. 
  Using Theorem 10.24 in \cite{beck2017first}, we can show the convergence of Algorithm \ref{alg:NOMADS}.
\end{proof}

 We can remove the assumption that (\ref{eq:optimization:optimization:problem}) has an optimal solution, if
the set of parameters $\mathbb{E}$ in the optimization problem (\ref{eq:optimization:optimization:problem})  is bounded.

\begin{rem}
    In case the constrained parameter space $\mathbb{E}$ is not a convex subset of $\mathbb{E}$, Algorithm \ref{alg:NOMADS} will generate a sequence of parameters $\theta_{\ell}\in\mathbb{E} ~ (\ell\in\mathbf{Z}_{\geq1})$ that converges to a stationary point of the objective function $f$.
\end{rem}

\begin{rem}
  Although the convergence of NOMADS is assured even if we use a constant stepsize $1/L_f\geq t>0$ \cite{beck2017first}, we expect a better behavior by \textit{adjusting} stepsize as shown in Algorithm \ref{alg:NOMADS} because the Lipschitz constant $L_f$ is often a huge real number depending on the data (\ref{eq:optimization:time:series:data}).
\end{rem}

%\begingroup
%\color{black}
\begin{thm}[Convergence rate of NOMADS]\label{thm:optimization:Convergence:rate}
Assume that problem~\eqref{eq:optimization:optimization:problem}
has an optimal solution.
Then Algorithm~\ref{alg:NOMADS} converges at a sublinear rate $O(1/\ell)$.
\end{thm}

\begin{proof}
By Theorem~\ref{thm:optimization:Lipschitz}, the objective function $f$ is convex
with a Lipschitz continuous gradient, and the feasible set
$\mathbb{E}=\mathrm{St}(A)\times\mathrm{St}(B)\times\mathrm{St}(D)$
is closed and convex.
Thus, Algorithm~\ref{alg:NOMADS} is an instance of projected gradient descent
for a smooth convex optimization problem.
The stated convergence rate follows from
\cite{nesterov2004introductory}.
\end{proof}
%\endgroup

%%%%%%%%%%%%%%%%%%%%%%%%%%%%%%%%%%%%%%%%%%%%%%%%%%%%%%%%%%%%%%%%%%%%%%%%%%%%%%%%
%%%%%%%%%%%%%%%%%%%%%%%%%%%%%%%%%%%%%%%%%%%%%%%%%%%%%%%%%%%%%%%%%%%%%%%%%%%%%%%%
% Sect. V # Numerical Experiments
%%%%%%%%%%%%%%%%%%%%%%%%%%%%%%%%%%%%%%%%%%%%%%%%%%%%%%%%%%%%%%%%%%%%%%%%%%%%%%%%
%%%%%%%%%%%%%%%%%%%%%%%%%%%%%%%%%%%%%%%%%%%%%%%%%%%%%%%%%%%%%%%%%%%%%%%%%%%%%%%%

\section{Numerical Experiments}
\label{sec:numerical}

%\begingroup
%\color{black}
We conduct numerical experiments on synthetic data to evaluate NOMADS,
focusing on \emph{generalization performance} under \emph{partially excited} experiments,
motivated by multi-zone thermostat dynamics with heat exchange with the environment.
Models are trained on multiple synthetic data and evaluated on unseen test data,
reflecting the challenges discussed in
Section~\ref{sec:optimization:identification}.
%\endgroup

\subsection{Experimental Set-ups}
%\begingroup
%\color{black}

\begin{comment}
    We consider a synthetic benchmark defined on a two-dimensional grid
    with periodic boundary conditions in one spatial direction.
    Specifically, the spatial domain is a Cartesian grid
    $[L_x]\times[L_y]$ with the left and right boundaries identified,
    which can be interpreted as a cylindrical topology.
    This setting serves as a simplified model of a multi-zone heating system
    interacting with an external reservoir.
    
    Each grid cell is indexed either by its coordinate $(x,y)$ or by a single
    index $i\in[L_xL_y]$.
    For each cell $i$, we denote by $N_i$ the set of its neighboring cells.
    Between neighboring cells, we define weighted undirected edges with weights
    $\kappa_{ij}$ drawn independently from a uniform distribution on $[w_0,w_1]$. Let $\mathcal{K}\in\mathbf{R}^{L_xL_y\times L_xL_y}$ and $\mathcal{L}\in\mathbf{R}^{L_xL_y\times L_xL_y}$ denote the resulting weighted adjacency matrix and graph Laplacian, respectively.
\end{comment}
We consider a synthetic benchmark defined on a two-dimensional grid
$[L_x]\times[L_y]$ with periodic boundary conditions in the first direction,
corresponding to a cylindrical topology.
Let $\mathcal{L}\in\mathbf{R}^{L_xL_y\times L_xL_y}$ denote a graph Laplacian on
this grid, where coupling strengths between adjacent cells are drawn
independently from a uniform distribution on the interval $[w_0,w_1]\subset \mathbf{R}$.
We first consider the following Markovian linear dynamical system:
\begin{equation}\label{eq:numerical:Experiment:Markov}
  \bx_{t+1} = \mathcal{A}\bx_t + \mathcal{B}\bu_t, \quad \bx_0 = 0_{L_xL_y\times1},
\end{equation}
where $\mathcal{A} := \id_{L_xL_y} + \mathcal{L}h$ and
$\mathcal{B} := \id_{L_xL_y}h$, with time step $h>0$.
The input sequence $\{\bu_t\}_{t\geq0}$ represents external actuation.

To model memory effects arising from system--reservoir interactions, we also consider a non-Markovian extension.
Specifically, we set $A=\mathcal{A}$, $B=\mathcal{B}$, and introduce a memory kernel matrix $D=\mathcal{D}$ in Definition~\ref{defn:Time:invariant:non-Markovian:state-space model},
with given integers $m>q\geq0$ and $\mathcal{D}\in\mathbb{T}_m^{q}(Q)$.
Such memory effects naturally arise in systems coupled to unobserved degrees of freedom such as external reservoirs \cite{svenkeson2016spectral}.

Throughout the experiments, we fix
%\endgroup
$L_x = 20$, $L_y=5$, $n = L_xL_y = 100$, $k=n$, $m = 1,000$, $h = 1/(m-1)$, $w_{0} = 0.5$, $w_{1} = 1.5$, $q=2$, $Q=3$, and 
\begin{equation}
  \mathcal{D} = \mqty(0 & 0 & -0.01 & 0 & \cdots\\
                      0 & 0 & 0.03  & -0.01 & \\
                      0 & 0 & 1     & 0.03  & \\
                      \vdots &&&&)\in\mathbb{T}_m^q(Q).
\end{equation}

Define a matrix $U_{\sigma, \nu, \xi}^{\bullet} = \qty(u_{it}) \in \mathbf{R}^{n\times m}$ as follows: for $t = 0,1,2,\dots$, spatial coordinates $x,y$, and $i\in[n]$, 
\begin{align}
  u_{it} = \begin{cases}\sigma^{x}\delta_{\xi,y}\sin(2\pi \nu th) & \bullet = \parallel\\ \sigma^{y}\delta_{\xi,x}\sin(2\pi \nu th) & \bullet = \perp\end{cases}.
\end{align}
%\begingroup
%\color{black}
Note that, in the Markovian case with zero control input, the \emph{energy} is conserved.
We define the energy at time $t$ as
\begin{equation}\label{eq:num:energy}
  E(t) = \frac{1}{n}\sum_{i\in[n]} (\bx_t)_i .
\end{equation}
%\endgroup

\subsection{Train and Test Datasets}
We generate train, test and energy test sets by using the following sets of pairs of control inputs and initial values $\mqty(U, \mqty(\bx_0 & \bx_1 & \cdots & \bx_q))$ as follows:
\begin{align}\label{eq:numerical:Experiment:train:u}
  \mathcal{S}_{\mathrm{tr}} &= \left\{\qty(U^{\parallel}_{\sigma, \nu, \xi}, 0_{n\times q}) \middle| \sigma =\pm1, \nu\in\{3,6\}, \xi\in[5]\right\},\\ \label{eq:numerical:Experiment:test:u}
  \mathcal{S}_{\mathrm{ts}} &= \left\{\qty(U^{\perp}_{1, \nu, 0}, 0_{n\times q}) \middle| \nu\in[1,9]\right\},\\ \label{eq:numerical:Experiment:energy:test:u}
  \mathcal{S}_{\mathrm{en}} &= \left\{\qty(0_{n\times m}, 1_{n\times q}) \right\},
\end{align}
respectively. We assume $q = 0$ for the Markovian case. 

We prepare pairs of train and test datasets both for Markovian (\ref{eq:numerical:Experiment:Markov}) and non-Markovian (Definition \ref{defn:Time:invariant:non-Markovian:state-space model}) dynamical systems using (\ref{eq:numerical:Experiment:train:u}) and (\ref{eq:numerical:Experiment:test:u}). \rev{
Noisy train data is generated using three independent noise realizations.
For each training trajectory $\tau \in \mathcal{S}_{\mathrm{tr}}$, the root mean square of the snapshot matrix
$X^{(\tau)}$ is defined as $\rho_\tau := \|X^{(\tau)}\| / \sqrt{nm}$, and additive Gaussian noise with
standard deviation $\sigma = 0.05 \times \max_{\tau\in\mathcal{S}_{\mathrm{tr}}} \rho_\tau$
is applied to the state observations, while no noise is added to the test datasets
$\mathcal{S}_{\mathrm{ts}}$ and $\mathcal{S}_{\mathrm{en}}$.
}

\subsection{Constraints and Learning}
We use following constraints A1 or A2 and B to identify the coefficient matrices $A, B$:
\begin{itemize}
  \item[A1] $A^\top=A$, $A_{ij}=0$ if $j\not\in N_i$, and off-diagonal elements of $A$ are nonnegative
  \item[A2] $A$ is a graph Laplacian, $A_{ij}=0$ if $j\not\in N_i$, and off-diagonal part of $A$ is nonnegative
  %\item[A3] $A$ is a matrix whose off-diagonal elements of $A$ are nonnegative and whose elements are nonzero only if the corresponding elements are nonzero in $\mathcal{A}$ 
  %\item[A4] $A$ is a symmetric matrix
  \item[B] $B$ is a diagonal matrix whose elements are nonnegative
  %\item[B2] $B$ is any 
\end{itemize}
For A2, we used Algorithm 4 in \cite{sato2024nearest} to construct the convex projection onto the set of (asymmetric) graph Laplacians.

In the learning procedure, we set $t_0 = 0.3$, $\eta = 1.05$, $\theta^{(0)} = (\id_{n}, 0_{n\times k}, \id_m)$, and $p=10,000$ in Algorithm \ref{alg:NOMADS}.

\subsection{Modeling using DMDc and DMDm}
\label{sec:num:modeling:using:dmdc}

%\begingroup
%\color{black}

We use DMDc~\cite{proctor2016dynamic} and DMDm~\cite{anzaki2023dynamic}
as baseline methods.
For both methods, no rank truncation or dimensionality reduction is applied,
since truncation based on train data would bias the identified models
toward the excited subspace and impair generalization under partial excitation.

DMDc is applied under the Markovian assumption disregarding the memory effect using a multi-trajectory formulation
(Section~\ref{sec:optimization:identification}),
while DMDm does not naturally admit multi-trajectory identification and is therefore
fit independently to each training trajectory, with a single representative model
constructed by element-wise averaging of the estimated coefficients.

%\endgroup

\subsection{Results}
\label{sec:num:results}

%\begingroup
%\color{black}

This section reports numerical results evaluating NOMADS in terms of
prediction accuracy, physical consistency, and optimization behavior
under partially excited experimental settings.

%\endgroup
\subsubsection{Overview of computational cost and prediction accuracy}

%\begingroup
%\color{black}

Table~\ref{tab:cost_accuracy} reports modeling time (average over 3 trials for noisy data), prediction RMSE (root mean squared error),
and, for the Markovian case, the maximum absolute energy error.
Across both Markovian and non-Markovian settings,
NOMADS achieves substantially smaller test reconstruction errors than
DMD-based baselines.

The symmetry constraint~(A1) generally yields lower RMSE,
whereas the graph Laplacian constraint~(A2) more accurately preserves
energy in the Markovian case.

%\endgroup

\begin{table*}[ht]
\centering
%\begingroup
%\color{black}

\caption{Computational cost and prediction accuracy.}
\label{tab:cost_accuracy}
\begin{tabular}{l c c c c c c }
\toprule
& \multicolumn{1}{c}{} & \multicolumn{3}{c}{Markov} & \multicolumn{2}{c}{Non-Markov} \\
\cmidrule(lr){3-5}\cmidrule(lr){6-7}
Method & \makecell{Modeling \\ Time [s]} & \makecell{RMSE \\(noiseless)} & \makecell{RMSE \\(noisy)} & \makecell{Max absolute \\ energy error}  & \makecell{RMSE \\(noiseless)} & \makecell{RMSE \\(noisy)} \\
\midrule
DMDc \cite{proctor2016dynamic}          & $\mathbf{0.89}$ & 0.33   &      $(2.20 \pm 2.01)\times 10^{12}$        & $5.01\times10^{-6}$    & $5.85\times10^{104}$ & $(7.12 \pm 4.62)\times 10^{11}$ \\
DMDm \cite{anzaki2023dynamic}          & 5.45 & $3.11\times10^{17}$ &  $(5.07 \pm 7.14)\times 10^{10}$ & $4.33\times10^{19}$    & NaN & NaN \\
NOMADS (A1+B)  & $2.99\times10^{4}$ & $\mathbf{1.06\times10^{-2}}$ & $ \mathbf{(1.12 \pm 0.0015)\times 10^{-1}}$ & $3.20\times10^{-3}$  & $\mathbf{1.02\times10^{-2}}$ & $\mathbf{(2.86 \pm 0.025)\times 10^{-2}}$\\
NOMADS (A2+B)  & $2.87\times10^{4}$ & $5.28\times10^{-2}$ & $(1.43 \pm 0.027)\times 10^{-1}$ & $\mathbf{1.57\times10^{-13}}$  & $2.92\times10^{-2}$ & $(3.51\pm0.046)\times10^{-2}$ \\
\bottomrule
\end{tabular}
%\endgroup

\end{table*}
%\begingroup
%\color{black}

%\endgroup

\subsubsection{Reconstruction results}

\begin{figure}[ht]
    \centering
    \begin{minipage}[t]{0.49\columnwidth}
      \centering
      \includegraphics[width=\linewidth,viewport=0 0 216 432,
  clip]{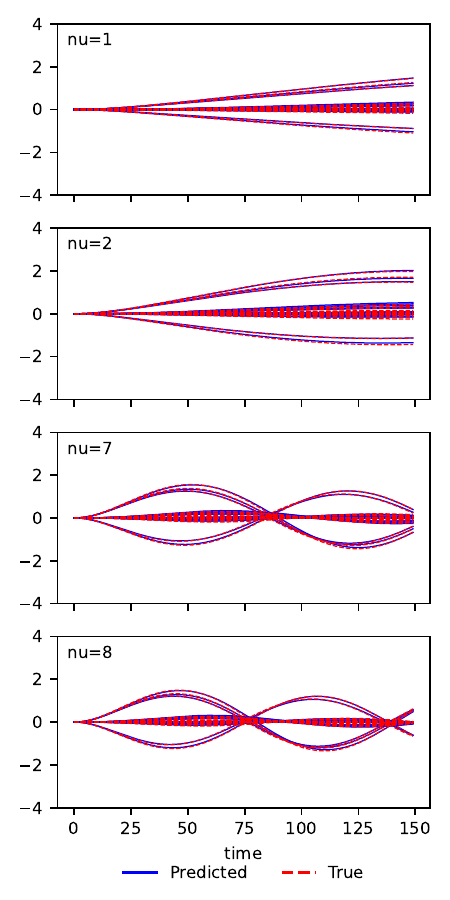}
    \end{minipage}\hfill
    \begin{minipage}[t]{0.49\columnwidth}
      \centering
      \includegraphics[width=\linewidth,viewport=0 0 216 432,
  clip]{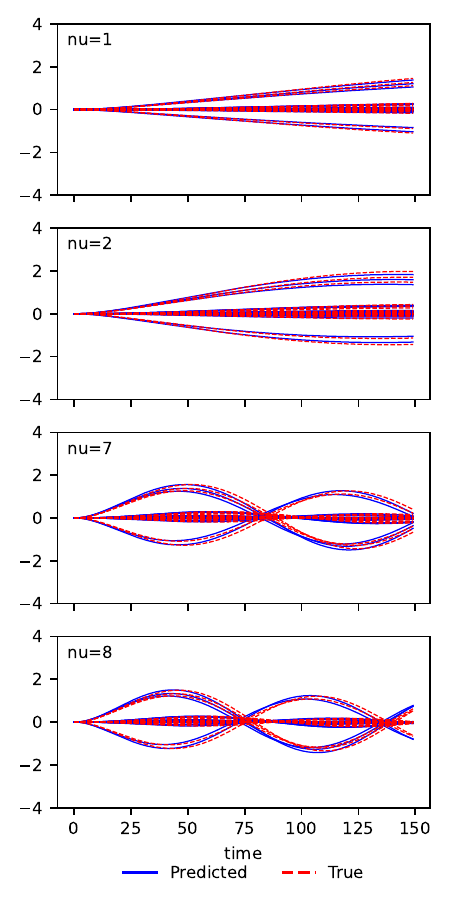}
    \end{minipage}
  \caption{\rev{NOMADS with A2 and B constraints for the non-Markovian dynamical system.
Comparison between the ground truth (solid lines) and reconstructed trajectories (dotted lines),
evaluated on $\mathcal{S}_{\mathrm{ts}}$.
Left column: models trained on noiseless training data.
Right column: models trained on noisy training data.}}
  \label{fig:num:markov:short:nomads} 
\end{figure}
\begin{figure}
      \centering
    \begin{minipage}[t]{0.49\columnwidth}
      \centering
      \includegraphics[width=\linewidth,viewport=0 0 216 432,
  clip]{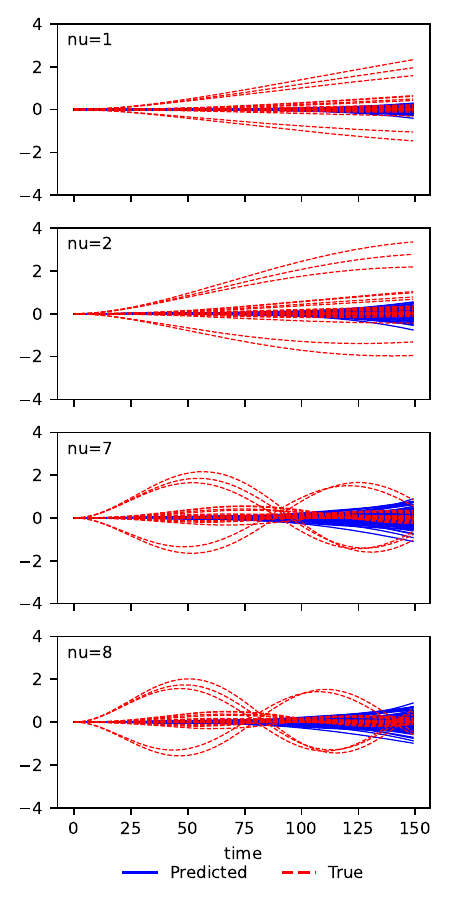}
    \end{minipage}\hfill
    \begin{minipage}[t]{0.49\columnwidth}
      \centering
      \includegraphics[width=\linewidth,viewport=0 0 216 432,
  clip]{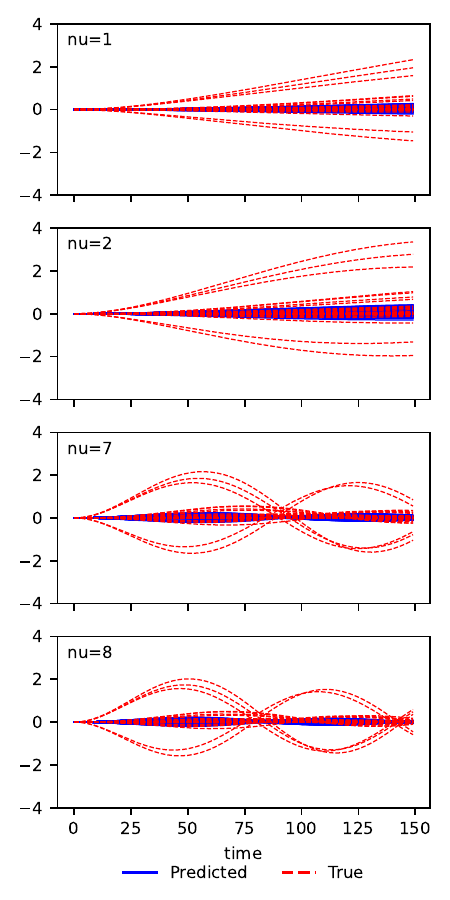}
    \end{minipage}
  \caption{
  %\begingroup
    %\color{black}
  Existing methods. Ground truth (solid) and reconstructed trajectories (dotted) for the Markovian dynamical system. Models are identified from noiseless train data $\mathcal{S}_{\mathrm{tr}}$ and evaluated on $\mathcal{S}_{\mathrm{ts}}$. Left: DMDm. Right: DMDc trained on horizontally concatenated trajectories.
  %\endgroup
  }
  \label{fig:num:markov:short:dmd}  
\end{figure}

%\begingroup
%\color{black}

Figures~\ref{fig:num:markov:short:nomads} and~\ref{fig:num:markov:short:dmd} compare reconstructed trajectories with the ground truth for both Markovian and non-Markovian systems.
NOMADS accurately reproduces the long-term dynamics in both noiseless and noisy training settings, whereas DMD-based methods exhibit significant deviations, particularly for non-Markovian dynamics, even when trained on noiseless data.

%\endgroup

\subsubsection{Energy conservation}

%\begingroup
%\color{black}

\begin{figure}[ht]
  \centering
  \includegraphics[width = \linewidth, viewport=10 10 560 270, clip]{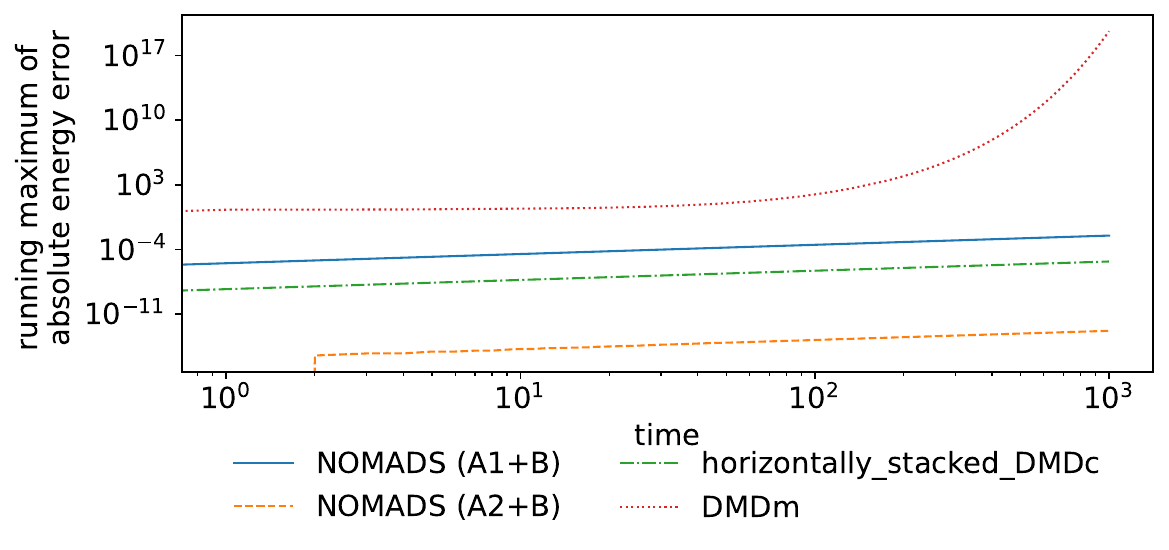}
  \caption{
  %\begingroup
    %\color{black}
Running maximum of the absolute energy error in the Markovian settings, where the energy \(E(t)\) is defined in \eqref{eq:num:energy}.
The initial condition is set to $\bx(0)=\mathbf{1}_{n\times 1}$. All models are trained on noiseless training data.
%The proposed NOMADS methods maintain bounded energy errors over time,
%whereas the DMD-based baselines exhibit rapidly growing violations of the
%energy conservation property.
%\endgroup
}
  \label{fig:num:energy}
\end{figure}

%\begingroup
%\color{black}

Figure~\ref{fig:num:energy} shows the time evolution of the energy error
in the Markovian setting, where energy conservation holds. The models are trained on noiseless data.
NOMADS with the graph Laplacian constraint~(A2) achieves the smallest
energy error.

%\endgroup
\subsubsection{Learning curves}

\begin{figure}[ht]
  \raggedright
  %\vspace{20mm}
  \hspace{8mm}  \includegraphics[
  width = \linewidth, viewport=20 0 530 200, clip]{./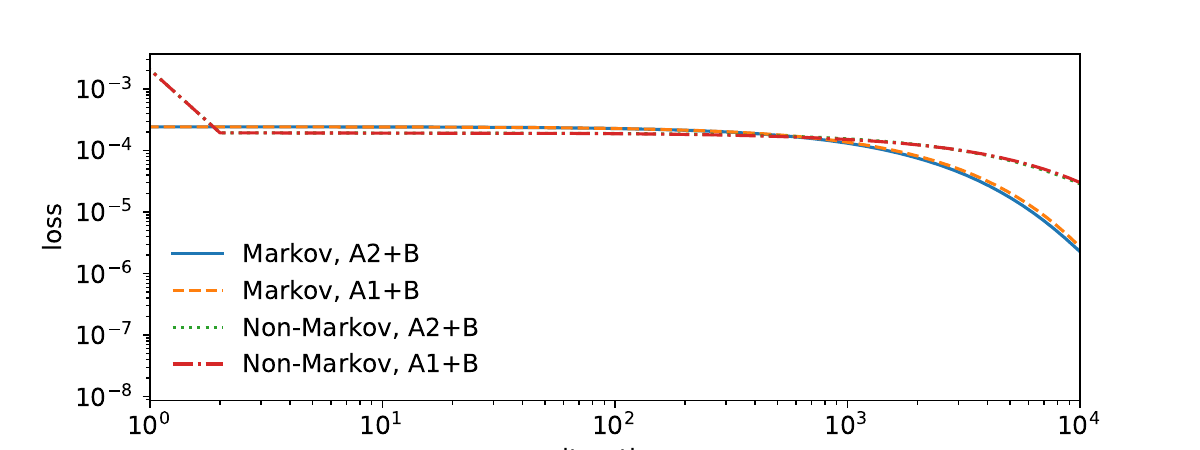}
  \caption{\rev{Learning curves of the training loss for NOMADS under different constraints.
All models are trained on noiseless training data.}}
  \label{fig:learning:curve}
\end{figure}

%\begingroup
%\color{black}

Figure~\ref{fig:learning:curve} shows the learning curves of NOMADS.
In the non-Markovian cases, the loss decreases rapidly in early iterations
and then converges gradually, reflecting the effect of projection onto
the feasible set of causal memory kernels.

%\endgroup

\subsection{Discussion}
%\begingroup
%\color{black}
This section discusses the numerical results in light of the three challenges
identified in the introduction:
(i) generalization under partial excitation,
(ii) physical consistency through constraints,
and (iii) modeling of non-Markovian dynamics.
As emphasized earlier, the first two challenges are closely related and are
addressed through multi-trajectory learning with physical constraints,
while the third concerns the expressiveness of the dynamical model itself.
%\endgroup

%\begingroup
%\color{black}
First, regarding reconstruction accuracy under partial excitation,
NOMADS consistently achieves smaller test errors than DMD-based methods
in both Markovian and non-Markovian settings.
Since the test trajectories differ qualitatively from the train data,
this improvement indicates genuine generalization rather than overfitting.
In contrast, DMDc and DMDm suffer from large errors, reflecting the
non-identifiability issues discussed in
Section~\ref{sec:optimization:identification}.

Second, in the Markovian case, NOMADS with the graph Laplacian constraint improves energy conservation,
while that with symmetry constraints tend to reduce reconstruction error.
This highlights a trade-off between numerical accuracy and strict enforcement
of physical structure, depending on the constraint design.

Third, NOMADS remains effective in the non-Markovian setting with noisy train data,
where standard DMD-based approaches fail to capture memory effects.
By explicitly incorporating a memory kernel into the optimization framework,
NOMADS yields stable long-term predictions even when individual trajectories
are only partially excited.

Finally, in the present setting, the underlying physical dynamics suggests that a symmetric graph Laplacian constraint would be the most natural choice.
Efficient projection onto the set of symmetric graph Laplacians remains computationally challenging.
While projection-based methods such as those of Sato \textit{et al.}~\cite{sato2024nearest} are principled, their repeated use within an iterative optimization framework like NOMADS would incur substantial computational overhead, and is therefore left for future work.
%\endgroup

%%%%%%%%%%%%%%%%%%%%%%%%%%%%%%%%%%%%%%%%%%%%%%%%%%%%%%%%%%%%%%%%%%%%%%%%%%%%%%%%
%%%%%%%%%%%%%%%%%%%%%%%%%%%%%%%%%%%%%%%%%%%%%%%%%%%%%%%%%%%%%%%%%%%%%%%%%%%%%%%%
% Sect. VI # Concluding
%%%%%%%%%%%%%%%%%%%%%%%%%%%%%%%%%%%%%%%%%%%%%%%%%%%%%%%%%%%%%%%%%%%%%%%%%%%%%%%%
%%%%%%%%%%%%%%%%%%%%%%%%%%%%%%%%%%%%%%%%%%%%%%%%%%%%%%%%%%%%%%%%%%%%%%%%%%%%%%%%

\section{Concluding Remarks}
\label{sec:concluding}

%\begingroup
%\color{black}
In this paper, we proposed NOMADS (Non-Markovian Optimization-based Modeling for
Approximate Dynamical Systems with Spatially-homogeneous memory), a system identification framework for dynamical systems using collections of partially excited experiments.
NOMADS integrates multiple trajectories, incorporates homogeneous memory effects,
and enforces physically motivated constraints within a convex optimization
formulation.

We established convergence guarantees for the projected gradient-based algorithm
used in NOMADS and demonstrated its effectiveness through numerical experiments using synthetic data.
The results show that NOMADS achieves lower reconstruction errors on test data
than DMD-based methods even for non-Markovian systems with noisy train data, indicating improved generalization performance measured
in terms of test-set reconstruction accuracy.
In our experiments, the training and test experiments are intentionally
qualitatively different, highlighting the robustness of the proposed framework.

Moreover, incorporating physically motivated constraints contributes both to
improved reconstruction accuracy and to the preservation of physical properties
such as conservation laws.

These results indicate that NOMADS provides a reliable and flexible approach for
identifying high-dimensional non-Markovian dynamical systems under partial
excitation.
%\endgroup

\begin{comment}
In this article, we proposed a system identification method NOMADS, standing for Non-Markovian Optimization-based Modeling for Approximate Dynamics. NOMADS is an efficient method to model a dynamical system using multiple trajectories and constraints corresponding to \textit{a priori} knowledge on the target system.

We mathematically proved the convergence of PGD algorithm in NOMADS. We also numerically demonstrated the system identification using train set of trajectories for identification and test set for evaluation of the identified system. We showed the model identified by NOMADS exhibits good generalization performance, while DMDc has extremely poor generalization performance for a test set that has qualitatively different control inputs and initial values compared to those of the train set. We also showed that the use of appropriate constraints enhances the reproduction of conservation law in the identified system. 

We expect that the proposed method is useful to enlarge the application of the time-series analysis methods \textit{from-analysis-to-identification}.
    
\end{comment}

%%%%%%%%%%%%%%%%%%%%%%%%%%%%%%%%%%%%%%%%%%%%%%%%%%%%%%%%%%%%%%%%%%%%%%%%%%%%%%%%
%%%%%%%%%%%%%%%%%%%%%%%%%%%%%%%%%%%%%%%%%%%%%%%%%%%%%%%%%%%%%%%%%%%%%%%%%%%%%%%%
% Appendix
%%%%%%%%%%%%%%%%%%%%%%%%%%%%%%%%%%%%%%%%%%%%%%%%%%%%%%%%%%%%%%%%%%%%%%%%%%%%%%%%
%%%%%%%%%%%%%%%%%%%%%%%%%%%%%%%%%%%%%%%%%%%%%%%%%%%%%%%%%%%%%%%%%%%%%%%%%%%%%%%%

\appendices

\section*{Acknowledgment}
%\begingroup
%\color{black}

The authors thank Mr. Ren Sasaki (Equipment Intelligence \& App R\&D Department, Tokyo Electron Ltd.) for his assistance with the numerical computations of DMDm. This work was partly supported by Japan Society for the Promotion of Science KAKENHI under Grant 23K28369.

%\endgroup

%%% PLEASE MODIFY FOR IEEE-TAC (1/2): PLEASE DELETE % BELOW
\ifarxiv
  \section{Linear dynamical systems with homogeneous memory and Fractional Dynamics}
    The time-evolution model in fractional DMD \cite{anzaki2023dynamic} is a fractional-order ordinary differential equation with a positive order $\alpha>0$:
    \begin{equation}
      \dv[\alpha]{\bx}{t} = A\bx(t) + B\bu(t).
    \end{equation}
    Note that the fractional derivative $\dv[\alpha]{}{t}$ is represented as the composition of integer-order and fractional derivatives as $\dv[\alpha]{}{t} = \dv[n_{\alpha}]{}{t}\dv[\alpha-n_{\alpha}]{}{t}$ with $n_{\alpha}$ being the minimum positive integer such that $n_{\alpha} -\alpha > 0$. The kernel of the original operator $\dv[\alpha]{}{t}$ is equivalent to the kernel of the integer-order derivative $\dv[n_{\alpha}]{}{t}$, i.e., 
    \begin{eqnarray}
      \ker\qty(\dv[\alpha]{}{t}) = \mathrm{span}\left\{t^{\ell}\middle| \ell\in[n_{\alpha}]\right\}.
    \end{eqnarray}
    Therefore, we multiply the pseudoinverse operator $\dv[-\alpha]{}{t}$ from left to the both sides of the above equation to yield
    \begin{equation}\label{eq:fracdmd}
      \bx(t) - \sum_{\ell = 0}^{n_{\alpha}}\bx_0^{(\ell)}t^\ell = \dv[-\alpha]{(A\bx + B\bu)}{t},
    \end{equation}
    where $\bx_0^{(\ell)} \in \mathbf{R}^n$ is the $\ell$th derivative of $\bx(t)$ at $t=0$. 
    
    In the discrete time representation for given time points $(t_i)_{i\in[m]}$ with equal spacing $\Delta t \coloneq t_{i+1} - t_i$, the fractional-order derivative is represented by a $m$-dimensional time-invariant causal matrix $D_{\alpha}\in\mathbf{R}^{m\times m}$ which satisfies the following conditions:
    \begin{enumerate}
      \item $D_{0} = \id_m$,
      \item $\mqty(0^b ~ 1^b ~ \dots ~ (m-1)^b)D_{a} = 0_{n\times m}$ for integers $b < a$,
      \item $D_{\alpha}D_{\beta} = D_{\alpha+\beta}$ for $\alpha, \beta > 0$.
    \end{enumerate}
  We can show that $\ker(D_{\alpha}^\top) = \ker(D_{n_{\alpha}}^\top) = \mathrm{span}\left\{\qty[v_i]\in\mathbf{R}^{m}\middle| v_i = (t_i)^\ell, ~ i\in[m], ~ \ell\in[n_{\alpha}]\right\}$. Noting that the discretization induces $\dv[-\alpha]{}{t}\to D_{\alpha}^+$, we can see that this is a special case of general memory kernel matrix.
  
  %%%%%%%%%%%%%%%%%%%%%%%%%%%%%%%%%%%%%%%%%%%%%%%%%%%%%%%%%%%%%%%%%%%%%%%%%%%%%%%%
  % Sect. III.F # Uniqueness
  %%%%%%%%%%%%%%%%%%%%%%%%%%%%%%%%%%%%%%%%%%%%%%%%%%%%%%%%%%%%%%%%%%%%%%%%%%%%%%%%
  \section{Uniqueness in DMD, DMDc, and NOMADS}

  In this section, we focus on the uniqueness of the NOMADS, MTDMD, and DMDc. There are limited attentions \cite{hirsh2020centering} on the uniqueness, but in our \textit{from-analysis-to-identification} perspective, the uniqueness of the coefficient is of utmost interest.

  \begin{thm}[Uniqueness in NOMADS]\label{thm:optimization:unique}
    Under the settings of Theorem \ref{thm:optimization:Convergence}, assume that $\mathrm{St}(A) = \mathbf{R}^{n\times n}$, $\mathrm{St}(B) = \mathbf{R}^{n\times k}$, $\mathrm{St}(D) = \mathbb{T}_m^q$. The optimization problem (\ref{eq:optimization:optimization:problem}) has a unique minimizer if and only if the Hessian (\ref{eq:optimization:Hessian}) is a positive definite operator.
    \end{thm}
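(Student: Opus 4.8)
The plan is to exploit the fact that, under the stated hypotheses, the feasible set $\mathbb{E}=\mathbb{E}_0=\mathbf{R}^{n\times n}\times\mathbf{R}^{n\times k}\times\mathbb{T}_m^q$ is an \emph{affine} subspace of the ambient space $\mathbf{R}^{n\times n}\times\mathbf{R}^{n\times k}\times\mathbf{R}^{m\times m}$: the first two factors are linear, and $\mathbb{T}_m^q$ is affine, being cut out by the homogeneous conditions that the first $q$ columns vanish and that the lower-right block be upper-triangular Toeplitz, together with the single inhomogeneous condition that its diagonal equal unity. Let $V$ denote the direction space of $\mathbb{E}$, so that $\mathbb{E}=\theta_0+V$ for any $\theta_0\in\mathbb{E}$; explicitly $V=\mathbf{R}^{n\times n}\times\mathbf{R}^{n\times k}\times V_{\mathbb{T}}$ with $V_{\mathbb{T}}$ the direction space of $\mathbb{T}_m^q$. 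Throughout, I read ``$\Hess f$ is positive definite'' as $\langle\Hess f[\theta],\theta\rangle>0$ for every $\theta\in V\setminus\{0\}$, which is the only meaningful notion here since the optimization moves $\theta$ only along $V$. Recalling that $f$ is an exact quadratic, its Hessian is a constant symmetric operator, and from the earlier computation $\langle\Hess f[\theta],\theta\rangle=2\sum_{\mu}\|e_{\mu}\|^2\ge0$, so $\Hess f$ is positive semidefinite and its quadratic form vanishes precisely when $e_{\mu}=0$ for all $\mu$.

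For the ``if'' direction I would argue that positive-definiteness of $\Hess f$ on $V$ makes $f|_{\mathbb{E}}$ strongly convex along $V$, so its sublevel sets are bounded; combined with the existence of a minimizer guaranteed by the setting of Theorem \ref{thm:optimization:Convergence}, this yields a unique minimizer, exactly as in the earlier uniqueness theorem \cite{bertsekas2016nonlinear}.

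For the ``only if'' direction I would proceed by contraposition. Assume $\Hess f$ is not positive definite on $V$; by positive semidefiniteness there exists $\theta^\ast\in V\setminus\{0\}$ with $\langle\Hess f[\theta^\ast],\theta^\ast\rangle=0$. Let $\theta_0\in\mathbb{E}$ be a minimizer. Since $\mathbb{E}$ is affine with direction space $V$, the entire line $\theta_0+t\theta^\ast$ remains feasible, and the exact quadratic expansion gives
\begin{equation}
\begin{split}
  f(\theta_0+t\theta^\ast) &= f(\theta_0)+t\langle\nabla f(\theta_0),\theta^\ast\rangle+\frac{t^2}{2}\langle\Hess f[\theta^\ast],\theta^\ast\rangle\\
  &= f(\theta_0)+t\langle\nabla f(\theta_0),\theta^\ast\rangle.
\end{split}
\end{equation}
Because $\theta_0$ minimizes $f$ on $\mathbb{E}$, the right-hand side must be at least $f(\theta_0)$ for every $t\in\mathbf{R}$, which forces $\langle\nabla f(\theta_0),\theta^\ast\rangle=0$. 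Hence $f$ is constant along the whole line $\theta_0+t\theta^\ast\subset\mathbb{E}$, so every point of the line is a minimizer and the minimizer is not unique, establishing the contrapositive.

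The hard part is conceptual rather than computational: correctly pinning down the domain on which positive-definiteness is to be tested. Since only $D$ is constrained (to the affine set $\mathbb{T}_m^q$) while $A,B$ range freely, one must restrict to the direction space $V$ of $\mathbb{E}$ and verify that the non-uniqueness certificate $\theta^\ast$ may be chosen inside $V$. Affineness of $\mathbb{E}$ is precisely what keeps the segment through $\theta_0$ in direction $\theta^\ast$ feasible, and exactness of the quadratic is what makes the second-order term vanish identically rather than only to leading order; these two structural facts are what let the positive-semidefinite quadratic form alone decide uniqueness.
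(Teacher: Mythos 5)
Your proof is correct, and it is worth noting that the paper itself states Theorem \ref{thm:optimization:unique} \emph{without proof}: the main text only establishes the ``if'' half (for general closed convex $\mathbb{E}$, via strong convexity and \cite{bertsekas2016nonlinear}), and the converse is only gestured at by the remark that the Hessian ``has nonempty kernel'' in general. Your argument therefore supplies the missing half, and it does so by the right mechanism: since $f$ is an exact quadratic, a null direction $\theta^\ast$ of the quadratic form through a minimizer $\theta_0$ gives $f(\theta_0+t\theta^\ast)=f(\theta_0)+t\langle\nabla f(\theta_0),\theta^\ast\rangle$, and optimality kills the linear term, yielding a whole line of minimizers. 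The most valuable part of your write-up is the point the paper leaves implicit: positive definiteness must be tested on the direction space $V$ of the affine set $\mathbb{E}=\mathbf{R}^{n\times n}\times\mathbf{R}^{n\times k}\times\mathbb{T}_m^q$, not on the ambient space. Indeed, for $q\geq1$ the ambient quadratic form always vanishes on perturbations $\Delta_D$ supported in the first $q$ rows (since the first $q$ columns of each $Y_\mu$ are zero), so the ambient reading would make the theorem vacuous; and your observation that normalization (unit diagonal) makes the invertibility requirement automatic, so that $\mathbb{T}_m^q$ really is affine and the line $\theta_0+t\theta^\ast$ stays feasible for all $t$, is exactly the structural fact the converse needs. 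Two cosmetic points: the Toeplitz condition constrains all of $C_{:,q:}$, not only the lower-right block (this does not affect affineness), and in the ``if'' direction strict convexity along $V$ already gives at-most-one minimizer, with existence supplied by the standing assumption of Theorem \ref{thm:optimization:Convergence} (or by coercivity on the affine subspace), matching the paper's main-text argument.
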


  If $D$ is fixed, the theorem becomes simpler:
  \begin{thm}[Uniqueness in Multi-trajectory DMDm]\label{thm:optimization:unique:multi:DMDm}
    Under the settings of Theorem \ref{thm:optimization:Convergence}, further assume that $\mathrm{St}(A) = \mathbf{R}^{n\times n}$, $\mathrm{St}(B) = \mathbf{R}^{n\times k}$, $\mathrm{St}(D) = \{D\}$ for a given matrix $D\in\mathbf{R}^{m\times m}$. The optimization problem (\ref{eq:optimization:optimization:problem}) has a unique minimizer if and only if the Hessian (\ref{eq:optimization:hessian:mtdmd}) is a positive definite matrix.
    \end{thm}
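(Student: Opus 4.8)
The plan is to exploit that, once $D$ is pinned to the given matrix, the loss $f$ collapses to a convex quadratic in the pair $(A,B)$, for which uniqueness of the minimizer is governed precisely by positive definiteness of the constant Hessian. First I would note that, because $\mathrm{St}(D)=\{D\}$ is a singleton, the only admissible variations are $\theta=(\Delta_A,\Delta_B,0)$; restricting the quadratic form $\langle\theta,\Hess f(A,B,D)[\theta]\rangle$ to such $\theta$ and invoking the preceding remark, this form equals the scalar built from the matrix $H(A,B)$ of (\ref{eq:optimization:hessian:mtdmd}). Since $f$ is quadratic in $(A,B)$, this Hessian is independent of the base point, so ``positive definite'' is unambiguous; and since $f$ is convex, established earlier via $\langle\theta,\Hess f[\theta]\rangle=2\sum_\mu\|e_\mu\|^2\ge 0$, the matrix $H$ is always positive semidefinite.

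For the forward direction I would assume $H$ is positive definite. Then $\langle\theta,\Hess f(A,B,D)[\theta]\rangle>0$ for every nonzero admissible $\theta$, so $f$ is strongly convex as a function of $(A,B)$ on the parameter space, which is unconstrained because $\mathrm{St}(A)=\mathbf{R}^{n\times n}$ and $\mathrm{St}(B)=\mathbf{R}^{n\times k}$. A strongly convex function on a Euclidean space has a unique minimizer, which settles this implication; equivalently one may cite the earlier theorem guaranteeing a unique minimizer under a positive-definite Hessian.

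For the converse I would argue by contraposition. If $H$ is not positive definite, then, being positive semidefinite, it has a nonzero kernel vector, giving some $(\Delta_A,\Delta_B)\neq(0,0)$ with $\sum_\mu\|\Delta_A X_\mu+\Delta_B U_\mu\|^2=0$. Let $(A^*,B^*)$ be a minimizer, which exists by the hypotheses carried over from Theorem \ref{thm:optimization:Convergence}. Expanding the quadratic $f$ exactly about $(A^*,B^*)$, the first-order term vanishes at the critical point and the second-order term vanishes along the kernel direction, so $f(A^*+s\Delta_A,\,B^*+s\Delta_B)=f(A^*,B^*)$ for all $s\in\mathbf{R}$. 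Thus a full line of minimizers exists and the minimizer is not unique. Combining the two implications gives the equivalence.

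The step I expect to be the main obstacle is the bookkeeping identifying the abstract Hessian operator $\Hess f(A,B,D)$ with the finite matrix $H(A,B)$ and confirming that definiteness of the matrix matches definiteness of the operator on the fixed-$D$ tangent space. Once the admissible variations are correctly restricted to $\Delta_D=0$ this is routine, but it must be handled with care, since the Hessian operator on the full $(A,B,D)$ space is generically only semidefinite; the whole argument hinges on reading positive definiteness on the correct subspace.
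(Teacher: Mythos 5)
Your proof is correct, and it fills in an argument the paper itself omits: Theorem \ref{thm:optimization:unique:multi:DMDm} is stated without proof, with only the one-directional statement (positive definite Hessian implies uniqueness) proved earlier and the converse hinted at in the remark that a nonempty Hessian kernel destroys uniqueness. Your two directions --- strong convexity of the quadratic in $(A,B)$ for sufficiency, and a kernel direction $(\Delta_A,\Delta_B)$ with $\Delta_A X_\mu+\Delta_B U_\mu=0$ generating a line of minimizers through the (assumed-to-exist) optimum for necessity --- are exactly the intended route, consistent with the paper's proof of the DMDc corollary that specializes this theorem.
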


  \begin{cor}[Uniqueness in DMDc]\label{rem:optimization:unique:DMDc}
    Under the settings of Theorem \ref{thm:optimization:unique:multi:DMDm}, let us further assume $N=1$ and $D = \id_m$. This setting is corresponding to DMDc. The optimization problem (\ref{eq:optimization:optimization:problem}) has a unique minimizer if and only if 
    \begin{equation}
      \rank\mqty(X \\ U) = n+k.
    \end{equation}
    Note that, the above equality is satisfied only if $m\geq n+k$.
  \end{cor}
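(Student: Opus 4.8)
The plan is to specialize Theorem \ref{thm:optimization:unique:multi:DMDm} to the present hypotheses and then reduce the resulting positive-definiteness condition to the stated rank equality through a Gram-matrix factorization. By Theorem \ref{thm:optimization:unique:multi:DMDm}, since $\mathrm{St}(A) = \mathbf{R}^{n\times n}$, $\mathrm{St}(B) = \mathbf{R}^{n\times k}$, and $D$ is held fixed at $\mathrm{St}(D) = \{D\}$, the optimization problem (\ref{eq:optimization:optimization:problem}) admits a unique minimizer if and only if the matrix $H(A,B)$ of (\ref{eq:optimization:hessian:mtdmd}) is positive definite. Imposing $N=1$ and $D = \id_m$ collapses the sum over trajectories to a single term involving only the data matrices $X$ and $U$, so it suffices to determine when
\begin{equation}
  H = \mqty[XX^\top & XU^\top \\ UX^\top & UU^\top]
\end{equation}
is positive definite.

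The decisive observation is that $H$ is a Gram matrix. Writing $M \coloneq \mqty[X \\ U] \in \mathbf{R}^{(n+k)\times m}$, a direct block multiplication gives $H = MM^\top$. I would then invoke the standard linear-algebra fact that, for any $M$, the symmetric positive-semidefinite matrix $MM^\top$ is positive definite if and only if $M$ has full row rank. Combining this with the equivalence from Theorem \ref{thm:optimization:unique:multi:DMDm} yields the claim: the problem has a unique minimizer precisely when $\rank M = n+k$, i.e.\ when $\rank\mqty[X \\ U] = n+k$.

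Finally, I would record the dimension constraint in the concluding remark. Because $M \in \mathbf{R}^{(n+k)\times m}$, its rank is bounded by $\min(n+k,\,m)$; hence the equality $\rank M = n+k$ can hold only if $m \geq n+k$, which is exactly the stated necessary condition.

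I do not expect a genuine obstacle here, as the argument is essentially bookkeeping once Theorem \ref{thm:optimization:unique:multi:DMDm} is available. The only points requiring care are the recognition that the quadratic-form matrix $H$ of (\ref{eq:optimization:hessian:mtdmd}) factors cleanly as $MM^\top$, and the justification that in this fixed-$D$ regime the Hessian is a genuine $(n+k)\times(n+k)$ matrix rather than the sandwich-type operator discussed in the general case—this is precisely what the hypothesis $\mathrm{St}(D)=\{\id_m\}$ guarantees, so the matrix criterion of Theorem \ref{thm:optimization:unique:multi:DMDm} applies verbatim.
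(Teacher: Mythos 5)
Your proposal is correct and follows essentially the same route as the paper: specialize the fixed-$D$ uniqueness theorem, recognize the Hessian as the Gram matrix $\mqty[X\\U]\mqty[X\\U]^\top$, and use the equivalence between positive definiteness of $MM^\top$ and full row rank of $M$ (the paper phrases this via $\rank(C)=\rank(CC^\top)$). No substantive difference.
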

  \begin{proof}
    In this case the Hessian becomes a $(n+k)$-dimensional square matrix (\ref{eq:optimization:hessian:mtdmd}):
    \begin{equation}
      \begin{split}
        \Hess f(A,B) &= \mqty(XX^\top & XU^\top \\ UX^\top & UU^\top) = \mqty(X\\ U)\mqty(X\\ U)^\top.
      \end{split}
    \end{equation}
    Noting that $\Hess f(A,B)$ is a positive semidefinite symmetric matrix, and is positive definite if and only if it further satisfies $\rank\qty(\Hess f(A,B)) = n + k$. Using the fact that $\rank(C) = \rank(CC^\top)$ for any real-valued matrix, one can conclude that $\rank\mqty(X \\ U) = n+k$ is the necessary and sufficient condition for the uniqueness.
  \end{proof}

  \begin{rem}
    The minimizer of the optimization problem (\ref{eq:optimization:optimization:problem}) with fixed $D$ is not unique in a single-trajectory case if $m < n+k$, and the minimizer is unique if and only if the data matrix (i.e., $X$ for DMD, $\mqty(X & U)$ for DMDc) has no left null-space. This means that, in many applications, the DMD and DMDc algorithm easily results in unphysical coefficients, especially if the number of time steps are small. 

    On the other hand, for the same optimization problem with $N>1$, the Hessian can be positive definite even if the minimum number of time steps $\min_{\mu}(m_{\mu})$ is smaller than the combined dimension $n+k$ provided we have appropriate set of trajectories.
  \end{rem}
%%% PLEASE MODIFY FOR IEEE-TAC (2/2): PLEASE DELETE % BELOW
\fi

\printbibliography

@book{goodwin2001control,
  title={Control system design},
  author={Goodwin, G. C. and Graebe, S. F. and Salgado, M. E. and others},
  volume={240},
  year={2001},
  publisher={Prentice Hall Upper Saddle River}
}

@article{aastrom1971system,
  title={System identification—a survey},
  author={{\AA}str{\"o}m, K. J. and Eykhoff, P.},
  journal={Automatica},
  volume={7},
  number={2},
  pages={123--162},
  year={1971},
  publisher={Elsevier}
}

@book{beck2017first,
  title={First-order methods in optimization},
  author={Beck, A.},
  year={2017},
  publisher={SIAM}
}

@book{bertsekas2016nonlinear,
  title={Nonlinear Programming: 3rd Edition},
  author={Bertsekas, D.},
  year={2016},
  publisher={Athena Scientific}
}

@book{roberts1974convex,
  title={Convex Functions: Convex Functions},
  author={Roberts, A. W. and Varberg, D. E.},
  year={1974},
  publisher={Academic Press}
}

@book{nesterov2004introductory,
  title={Introductory Lectures on Convex Optimization: A Basic Course},
  author={Nesterov, Yurii},
  publisher={Kluwer Academic Publishers},
  year={2004},
  isbn={978-1-4020-7786-9}
}

@article{sato2024nearest,
author = {Sato, K. and Suzuki, M.},
title = {The nearest graph Laplacian in Frobenius norm},
journal = {Numerical Linear Algebra with Applications},
pages = {e2578},
keywords = {convex quadratic optimization, directed graph, graph Laplacian, optimization algorithm},
year={2024}
}

@article{anzaki2024multi,
  title={Multi-trajectory Dynamic Mode Decomposition},
  author={Anzaki, R. and Yamada, S. and Tsutsui, T. and Matsuzawa, T.},
  year={2024},
  journal={Jxiv preprint}
}

@article{schmid2022dynamic,
author = {Schmid, P. J.},
title = {Dynamic Mode Decomposition and Its Variants},
journal = {Annual Review of Fluid Mechanics},
volume = {54},
number = {1},
pages = {225-254},
year = {2022},
}

@article{proctor2016dynamic,
  title={Dynamic mode decomposition with control},
  author={Proctor, J. L. and Brunton, S. L. and Kutz, J. N.},
  journal={SIAM Journal on Applied Dynamical Systems},
  volume={15},
  number={1},
  pages={142--161},
  year={2016},
  publisher={SIAM}
}

@article{hirsh2020centering,
author = {Hirsh, S. M. and Harris, K. D. and Kutz, J. N. and Brunton, B. W.},
title = {Centering Data Improves the Dynamic Mode Decomposition},
journal = {SIAM Journal on Applied Dynamical Systems},
volume = {19},
number = {3},
pages = {1920-1955},
year = {2020},

}

@article{anzaki2023dynamic,
  title={Dynamic mode decomposition with memory},
  author={Anzaki, R. and Sano, K. and Tsutsui, T. and Kazui, M. and Matsuzawa, T.},
  journal={Physical Review E},
  volume={108},
  number={3},
  pages={034216},
  year={2023},
  publisher={APS}
}

@article{tu2014dynamic,
title = {On dynamic mode decomposition:  Theory and applications},
journal = {Journal of Computational Dynamics},
volume = {1},
number = {2},
pages = {391-421},
year = {2014},
author = {J. H. Tu and C. W. Rowley and D. M. Luchtenburg and S. L. Brunton and J. N. Kutz},
}

@article{schmid2010dynamic,
  title={Dynamic mode decomposition of numerical and experimental data},
  author={Schmid, P. J.},
  journal={Journal of fluid mechanics},
  volume={656},
  pages={5--28},
  year={2010},
  publisher={Cambridge University Press}
}

@article{svenkeson2016spectral,
  title = {Spectral decomposition of nonlinear systems with memory},
  author = {Svenkeson, Adam and Glaz, Bryan and Stanton, Samuel and West, Bruce J.},
  journal = {Phys. Rev. E},
  volume = {93},
  issue = {2},
  pages = {022211},
  numpages = {10},
  year = {2016},
  publisher = {American Physical Society},
  %doi = {10.1103/PhysRevE.93.022211},
}

%%%%%%%%%%%%%%%%%%%%%%%%%%%%%%%%%%%%%%%%%%%%%%%%%%%%%%%%%%%%%%%%%%%%%%%%%%%%%%%%
%%%%%%%%%%%%%%%%%%%%%%%%%%%%%%%%%%%%%%%%%%%%%%%%%%%%%%%%%%%%%%%%%%%%%%%%%%%%%%%%
%%%%%%%%%%%%%%%%%%%%%%%%%%%%%%%%%%%%%%%%%%%%%%%%%%%%%%%%%%%%%%%%%%%%%%%%%%%%%%%%
%%%%%%%%%%%%%%%%%%%%%%%%%%%%%%%%%%%%%%%%%%%%%%%%%%%%%%%%%%%%%%%%%%%%%%%%%%%%%%%%

\end{document}